\newtheorem{theo}{Theorem}
\newtheorem{theorem}{Theorem}
\newtheorem{lemma}[theorem]{Lemma}
\newtheorem{prop}[theorem]{Proposition}
\newtheorem{pro}[theo]{Proposition}
\newtheorem{cor}[theorem]{Corollary}
\newtheorem{remark}{Remark}
\renewcommand{\phi}{\varphi}
\newcommand{\R}{\mathbb{R}}
\newcommand{\Z}{\mathbb{Z}}
\newcommand{\N}{\mathbb{N}}
\newcommand{\wZ}{\widetilde{Z}}
\newcommand{\la} {\lambda}
\newcommand{\si}{\sigma}
\newcommand{\al}{\alpha}
\newcommand{\ga}{\gamma}
\newcommand{\om}{\omega}
\newcommand{\eps}{\varepsilon}
\newcommand{\vr}{\varrho}
\newcommand{\vk}{\kappa}
\renewcommand{\epsilon}{\varepsilon}
\renewcommand{\mid}{\,|\,}
\newcommand{\won}{{\boldsymbol 1}}
\newcounter{constante}
\newcommand{\con}[1]{
\immediate\write 1{\noexpand\newlabel{#1}{{\theconstante}{\theconstante}}}
                    c_{\theconstante}
                    \stepcounter{constante}
                   }
\begin{document}

\title[Recurrence and transience of autoregressive processes] {Recurrence and transience of contractive autoregressive processes and related Markov chains}

\author{Martin P.W.\ Zerner} 

\thanks{{\em 2010 Mathematics Subject Classification.}  60J05, 60J80, 37H10}  
\thanks{{\em Key words:} Autoregressive process; branching process;  excited random walk; frog process; immigration; Lyapunov exponent; max-autoregressive process;  product of random matrices; random affine recursion; random difference equation; random environment; random exchange process; recurrence; super-heavy tail; transience}
\thanks{{\em Address:}
Mathematisches Institut,
Universit\"at T\"ubingen,
Auf der Morgenstelle 10,
72076 T\"ubingen, Germany.
Email: martin.zerner@uni-tuebingen.de
}

\begin{abstract} 
We characterize 
recurrence and transience 
 of nonnegative multivariate autoregressive processes of order one with random contractive coefficient matrix,  of subcritical multitype Galton-Watson branching processes in random environment with immigration, and 
 of the related max-autoregressive processes and general random exchange processes.  Our criterion is given in   terms of the maximal Lyapunov exponent of the coefficient matrix and the cumulative distribution function of the innovation/immigration component.  
\end{abstract}
\maketitle
\section{Introduction}
The classification of irreducible Markov chains as recurrent or transient is one of the fundamental  objectives in the study of Markov chains. 
Scalar {\em nonnegative autoregressive processes} $(X_n)_{n\in\N_0}$ of the form  
\[X_{n}=aX_{n-1}+Y_n,\text{\qquad where $0<a<1$ and $(Y_n)_{n\in\N}$ is i.i.d.,}\] 
and, closely related, {\em subcritical Galton-Watson processes $(Z_n)_{n\in\N_0}$ with immigration $(Y_n)_{n\in\N}$} and average offspring $a\in(0,1)$   are  classical  Markov chains. The study of these processes has a rich history which started more than half a century ago. However, most of the literature on these processes deals only with the positive recurrent case,  i.e.\ the case where there exists a stationary probability distribution.
To the best of our knowledge  there is at present no complete classification in simple terms of recurrence versus transience of these processes although this problem has been investigated for several decades, see \cite{Pak75}, \cite{Pak79}, \cite[Part I]{Kel92}, \cite[p.\ 1196]{GM00}, \cite{ZG04}, \cite{Bau13} and the review below.

In the present article we characterize  recurrence and transience of these processes in terms of $a$ and the cumulative distribution function of $Y_1$. More precisely, we show that either process is 
\begin{equation}
\text{recurrent\qquad iff}\qquad
\sum_{n\ge 0}\prod_{m=0}^nP[Y_1\le ya^{-m}]=\infty
\label{raa}
\end{equation}
 for some $y\in(0,\infty)$,
see Theorem \ref{rtd}. (For the branching process we need to assume  a certain moment condition on the offspring distribution.) 
 Note that the right hand side of (\ref{raa})  can often be easily checked by ratio tests.

 We also extend this result to certain multidimensional  cases  in  random environment by classifying   
nonnegative multivariate autoregressive processes of order one with random contractive coefficient matrix and  subcritical multitype Galton-Watson processes in random environment with immigration. The criterion also applies to  two other related processes, sometimes 
called {\em max-autoregressive process} and {\em general random exchange process}.

We first introduce these four processes and review the existing literature on the subject. 
(The precise definition of recurrence and transience is given in the next section.)

{\bf Autoregressive processes.} Autoregressive  models are among the most widely used stochastic models, see e.g.\ \cite{Kes73}, \cite{BD91}, \cite{MT93},  \cite{BDM16}. We consider nonnegative multidimensional autoregressive processes $X=(X_n)_{n\ge 0}$ of order one (AR(1) processes) with random coefficient matrix, defined as follows. Fix a dimension $d\in\N$. 
Let $Y=(Y_n)_{n\ge 0}$ be a sequence of $[0,\infty)^{d}$-valued random vectors, called {\em innovations}, and let $(A_n)_{n\ge 1}$ be a sequence of $[0,\infty)^{d\times d}$-valued random matrices. Assume that  $(A_n,Y_n)_{n\ge 1}$ is i.i.d.\ and independent of $Y_0$. To avoid cases which are not interesting in the present context we suppose that the support of the law of $Y_1$ is unbounded.
Set
$X_0:=Y_0$ and
\begin{equation}
 \label{rec} X_{n}:=A_{n}X_{n-1}+Y_{n}\quad \text{for $n\ge 1$.}
\end{equation}
Relation (\ref{rec}) is sometimes called a {\em random difference equation} or {\em random affine  recursion}, see also  \cite[p.\ 1]{BDM16}.
Solving this recursion we obtain the explicit expression
\begin{equation}\label{note} 
X_{n}=\sum_{m=0}^nA_{n}A_{n-1}\ldots A_{m+1}Y_{m}\qquad \text{for $n\ge 0$.}
\end{equation}
We only consider the subcritical (contractive) case where the maximal Lyapunov exponent of $A_n$ is strictly less than 0. For convenience we phrase our statements in terms of the negative $\la$ of the Lyapunov exponent, defined as the a.s.\ limit 
\begin{equation}
 \label{lala}\la:=\lim_{n\to\infty}\frac{S_n}n,\quad\text{where}\quad S_n:=-\ln\|A_1\ldots A_n\|.
\end{equation}
(Here  
$S_0:=0$.)
 This limit is known to exist if $E[\log_+\|A_1\|]<\infty$, see \cite[Theorem 2]{FK60}.
For bounds and efficient methods for the computation of $\la$ see e.g.\ \cite{Pol10}.
It has been shown  for the subcritical case ($\la>0$) that under various conditions $X$ is positive recurrent iff
\begin{equation}
  \label{pos} E[\ln_+ \|Y_1\|]<\infty,
 \end{equation}
see e.g.\ \cite[Theorem 1.6. (b)]{Ver79}, \cite[Part III, Theorem (8.5)]{Kel92}, \cite[Corollary 4.1 (b)]{GM00}, \cite[Theorem 2.1.3]{BDM16} for $d=1$ and \cite[Proposition 2]{ZG04} for $d\ge 1$ and constant coefficient matrix $A=A_n$. For a discussion of the multidimensional case with random $A_n$ see 
\cite{Erh14}.
(Note that the equivalence of positive recurrence and the existence of an invariant probability measure, which is well-known for countable state spaces, also holds in this setting, see e.g.\ \cite[Section 6]{Kel06}.)
The case where (\ref{pos}) fails is sometimes referred to as {\em super-heavy tailed}, see \cite{ZG04}.

Among the few works which deal with recurrence versus transience of AR(1) processes are the unpublished preprint \cite[Part I]{Kel92} and \cite{ZG04}. (For  some recent  work with deals with super-heavy tailed innovations see \cite{BI15}.) Both papers treat
one-dimensional AR(1) processes with constant coefficient $A_1=a\in(0,1)$. In \cite[Part I, Theorem (3.1)]{Kel92} Kellerer shows that  $X$ is 
\begin{eqnarray}\label{Ke1}
\text{transient if} &&
\liminf_{t\to\infty}t\cdot P[\ln Y_1>t]>-\ln a\quad\text{ and}\\
\text{recurrent if} &&\limsup_{t\to\infty}t\cdot P[\ln Y_1>t]<-\ln a.\label{Ke2}
 \end{eqnarray}
 In \cite[Theorem 1]{ZG04}  Zeevi and Glynn 
 consider log-Pareto distributed innovations $Y_n$, whose common distribution is given by $P[\ln(1+Y_1)>t]=(1+\beta t)^{-p}$ for some $\beta>0$ and $p>0$.
 For this case they 
 characterize recurrence and transience 
 by showing that $X$ is 
 positive recurrent if $p>1$, null recurrent if $p=1$ and $\beta\ln (1/a)\ge 1$, and transient otherwise.
Note that in this example the distinction between recurrence and transience easily follows from (\ref{raa}) and Raabe's test.

{\bf Branching processes with immigration.}
The classical Galton-Watson model as a basic model for branching populations, see e.g.\ \cite{Har63} and \cite{AN72}, has been extended in various directions, for example by allowing finitely many different types of individuals with different offspring distributions  \cite[Chapter V]{AN72}, by letting the offspring distribution depend on time in a random way \cite[Chapter VI.5]{AN72}, or  by allowing immigration \cite[Chapter VI.7]{AN72}. Following e.g.\ \cite{Key87}, \cite{Roi07}, and \cite{Vat11}, we consider a combination of these three generalizations, namely multitype Galton-Watson branching processes $Z=(Z_n)_{n\ge 0}$ in random environment with immigration.

We postpone the precise definition of $Z$ to the next section and first give an informal description of the model. Let $d\in\N$ and  $(Y_n)_{n\ge 0}$ be as above and let us assume for the moment that all $Y_n$ are $\N_0^d$-valued.
There are $d$ different types of individuals, enumerated by $1,\ldots,d$. The $i$-th component of the $\N_0^d$-valued random variable $Z_n$ is the number of individuals of type $i$ present in generation $n$. 
Given $Z_{n-1}$, the $n$-th generation $Z_n$ is obtained as follows. Each member of generation $n-1$ gets independently of the other members of that generation a random number of children of the $d$ different types. The distribution of the number of children of a certain type may depend on the type of the parent. It may also depend in an i.i.d.\ way, called the random environment, on the number $n-1$ of the generation.  
The $n$-th generation consists of the children of the individuals of the previous generation and additional immigrants of type $1,\ldots,d$, whose numbers are given by $Y_n$.

This process $Z$ is closely related to AR(1) processes in the following way. Define the $(i,j)$-th entry of the matrix $A_n$ as 
the conditional expectation of the number of children of type $i$ in generation $n$ of a parent of type $j$ given the  random environment.
Then the conditional expected value of $Z$ given the random environment and given the numbers of immigrants satisfies the recursion (\ref{rec}) and is therefore an AR(1) process, see (\ref{hust2}) below.

 The process $Z$ is called subcritical iff $\la>0$, where $\la$ is defined as in (\ref{lala}).
 As for AR(1) processes, positive recurrence of a subcritical $Z$ is related to the validity of  (\ref{pos}), see e.g.\  \cite{Qui70},
 \cite[Corollary 2]{FW71}, \cite[Theorem A]{Pak79},  \cite[Theorem 3.3]{Key87},  \cite{Roi07}, and  \cite{Vat11}.

 In the one-dimensional case  the results in the literature concerning  the distinction between recurrence and transience of $Z$ are more complete than the corresponding results  for autoregressive processes. Pakes considers  in \cite{Pak75} and \cite{Pak79}  subcritical single-type  processes  with immigration in an environment which is constant in time. He gives several sufficient conditions  for recurrence   or transience in terms of generating functions and provides  several examples. 

For subcritical single-type branching processes in random environment Bauernschubert derives in \cite[Theorems 2.2, 2.3]{Bau13} conditions similar to (\ref{Ke1}) and (\ref{Ke2}). She shows that  under suitable assumptions
  $Z$ is
\begin{eqnarray}\label{BB1}
\text{transient if} &&
\liminf_{t\to\infty}t\cdot P[\ln Y_1>t]>-E[\ln A_1]\quad\text{ and}\\
\text{recurrent if} &&\limsup_{t\to\infty}t\cdot P[\ln Y_1>t]<-E[\ln A_1].\label{BB2}
 \end{eqnarray}

For a different but similar model in continuous time, Li, Chen, and Pakes \cite[Theorem 3.3 (ii)]{LCP12} give a necessary and sufficient criterion for recurrence and transience, also in  terms of generating functions.  Unfortunately, ``it is not easily applicable in specific cases" \cite[p.\ 136]{LCP12}. This raises the question whether a modification of our criterion (\ref{raa})  also holds for that model.

{\bf Max-autoregressive processes.} By replacing the sum in (\ref{rec}) with the maximum we obtain the process 
$M=(M_n)_{n\ge 0}$ defined by $M_0:=Y_0$ and 
\begin{equation}\label{den}
M_{n}:=\max\{A_{n}M_{n-1},Y_{n}\},\quad n\ge 1.
\end{equation}
Here the maximum is taken for each coordinate of $\R^d$  separately. 
Such processes have been studied e.g.\ in  \cite{Gol91} and \cite{RS95} and  are sometimes called {\em max-autoregressive}.
They appear naturally in our proof. 
If $d=1$ then similarly to (\ref{note}),
$M_{n}=\max_{m=0}^nA_n\ldots A_{m+1}Y_{m}$  for all $n\ge 0$.
For general dimension $d\ge 1$ we have
\begin{equation}
 \label{note3} X_n\ge M_n\ge N_n:=\max_{m=0}^nA_n\ldots A_{m+1}Y_{m},
\end{equation}
where the inequalities hold for each of the $d$ components. 
 We are not aware of any results in the literature on the classification of recurrence versus transience of max-autoregressive processes.

{\bf General random exchange processes.} 
These are one-dimensional processes $R=(R_n)_{n\ge 0}$  which have been studied e.g.\ in \cite{HN76} and are defined as follows.
Let $(W_n)_{n\ge 0}$ be a sequence of nonnegative random variables with unbounded support and let $(T_n)_{n\ge 1}$ be a sequence of real-valued random variables such that $(T_n,W_n)_{n\ge 1}$ is i.i.d.\ and independent of $W_0$. Set $R_0:=W_0$ and 
\begin{equation}R_{n}:=\max\{R_{n-1}-T_{n},W_{n}\},\quad n\ge 1.
 \label{gep}
\end{equation}
The starting point of our investigation was the following classification of recurrence and transience of $R$ in the special case where $T_n$ is a positive constant (random exchange process).
To the best of our knowledge this observation  was first made by Kesten  in the appendix to \cite{Lam70}, where it was phrased in terms of long range percolation. Later it was stated in a more general form in terms of Markov chains by Kellerer \cite[pp.\ 268,269]{Kel06}.
\begin{pro}\label{pro}{\bf (Random exchange process; Kesten, Kellerer)} 
Let $W_n,n\ge 1,$ be i.i.d.\ $\N_0$-valued random variables satisfying $P[W_1=0]>0$. 
Then the state 0 is recurrent for the Markov chain $R$ satisfying $R_{n}:=\max\{R_{n-1}-1,W_{n}\}$ iff  
\begin{equation}\label{kk}
 \sum_{n\ge 0}\prod_{m=0}^nP[W_1\le m]=\infty.
\end{equation}
\end{pro}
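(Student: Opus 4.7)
My plan is to iterate the recursion explicitly, read off the probability of being at $0$ in closed form, and then invoke the standard $\sum_n P_0[R_n = 0] = \infty$ criterion for recurrence of the single state $0$.

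First, I would solve the recursion (\ref{gep}) with $T_n \equiv 1$. Starting the chain at $R_0 = 0$, a one-line induction on $n$ gives the explicit representation
\[
R_n \;=\; \max_{1 \le k \le n}\bigl(W_k - (n-k)\bigr), \qquad n \ge 1,
\]
which is automatically nonnegative because the term $k = n$ contributes $W_n \ge 0$. This is exactly the max-autoregressive phenomenon from (\ref{note3}); the constant drift $-1$ turns the problem into a last-passage-type computation.

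From this formula I read off that $\{R_n = 0\}$ is precisely the event $\bigcap_{k=1}^{n}\{W_k \le n - k\}$, so by the i.i.d.\ assumption
\[
P_0[R_n = 0] \;=\; \prod_{k=1}^{n} P[W_1 \le n-k] \;=\; \prod_{j=0}^{n-1} P[W_1 \le j].
\]
Summing over $n \ge 0$ (with the empty product for $n=0$ equal to $1$) gives $1 + \sum_{n \ge 0}\prod_{m=0}^{n} P[W_1 \le m]$, so (\ref{kk}) is equivalent to $\sum_{n \ge 0} P_0[R_n = 0] = \infty$.

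To finish I invoke the classical renewal identity $\sum_{n \ge 0} P_0[R_n = 0] = (1 - P_0[T<\infty])^{-1}$, where $T$ denotes the first return time to $0$: the forward direction is Borel--Cantelli, and the converse follows because, conditionally on $\{T < \infty\}$, the strong Markov property makes the number of visits to $0$ geometric. Hence (\ref{kk}) holds iff $P_0[T < \infty] = 1$, i.e.\ iff $0$ is recurrent. The hypothesis $P[W_1 = 0] > 0$ only serves to make the problem non-degenerate (ensuring in particular that returns to $0$ are possible). The main---and only---step requiring any thought is obtaining the closed-form expression for $R_n$; after that, everything is routine and no real obstacle remains.
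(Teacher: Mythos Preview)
Your proof is correct and follows essentially the same route as the paper's: solve the recursion explicitly to get $R_n=\max_{1\le k\le n}(W_k-(n-k))$, factor $P_0[R_n=0]$ using independence, and conclude via the standard criterion $\sum_n P_0[R_n=0]=\infty$ for recurrence of $0$ (which the paper simply cites from \cite[Theorem 6.4.2]{Dur10}, whereas you spell out the renewal identity).
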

\begin{proof} It is well-known, see e.g.\ \cite[Theorem 6.4.2]{Dur10}, that $0$ is recurrent iff 
 $\sum_{n\ge 1}P[R_n=0]=\infty$. Solving the recursion  with initial state 0 yields
 $R_n=\max_{m=1}^n(W_m-n+m)$ for all $n\ge 1$. Since $(W_n)_{n\ge 1}$ is i.i.d.\ we have for all $n\ge 1$,
 \[P[R_n=0]=\prod_{m=1}^nP[W_m\le n-m]=\prod_{m=0}^{n-1}P[W_1\le m].\]
The claim follows.
\end{proof}

The significance of Proposition \ref{pro}  in the present context is that on a heuristic level one can easily deduce from it in several steps the recurrence/transience criterion for our processes $X,Z$ and $M$ introduced above. However, our actual proof will not follow these steps. 

{\em Step 1.}  If $R$ satisfies only the more general recursion $R_{n}=\max\{R_{n-1}-c,W_{n}\}$ 
for some  constant $c\in\N$ then  the event $\{W_1\le m\}$ in (\ref{kk}) has to be replaced by  $\{W_1\le mc\}$.
  
 {\em Step 2.} If we do not require the minimum $y$ of the support of $W_1$ to be 0 
 then  the event $\{W_1\le mc\}$ in Step 1 has to be replaced by  $\{W_1\le y+mc\}$.
The same holds for any $y$ satisfying $P[W_1\le y]>0$.

{\em Step 3.}  It is easy to guess but much harder to prove that for the general random exchange process satisfying (\ref{gep}) and $E[T_1]>0$ the event 
$\{W_1\le y+mc\}$ from Step 2 has to be replaced by $\{W_1\le y+mE[T_1]\}$, see Corollary \ref{grep} below.
 
 {\em Step 4.} The process $e^R$ is a one-dimensional max-autoregressive process $M$ which satisfies the recursion $M_{n}=\max\{A_{n}M_{n-1},Y_{n}\}$ with $A_n=e^{-T_n}$ and $Y_n=e^{W_n}$. It follows from Step 3 that if $y$ is such that $P[Y_1\le y]>0$ then $M$ should be recurrent iff $\sum_{n\ge 0}\prod_{m=0}^nP[Y_1\le ye^{-mE[\ln A_1]}]$ is infinite.

 {\em Step 5.} By the strong law of large numbers, $\la=-E[\ln A_1]$ if $d=1$. Thus for multi-dimensional max-autoregressive processes one should replace $-E[\ln A_1]$ in Step 4 by $\la$ and get that for all $y$ satisfying  $P[\|Y_1\|\le y]>0$, $M$ is recurrent iff 
 \begin{equation}\sum_{n\ge 0}\prod_{m=0}^nP[\|Y_1\|\le ye^{m\la}]=\infty.
  \tag{RR}
 \end{equation}

 {\em Step 6.} It is a well-known phenomenon (max-sum-equivalence) that the sum of heavy tailed random variables tends to be comparable to the largest summand. Thus one might expect that the recurrence criterion for $M$ derived in Step 5 also applies to $X$ and, due to the relation between $X$ and $Z$ described above,  to $Z$ as well.
 
 That the conclusion of this heuristics is indeed true under suitable conditions is the content of our main results, Theorem \ref{rtd} and Theorem \ref{rtr}.

\begin{remark}\label{rm}{\rm 
  Since positive recurrence implies recurrence, (\ref{pos}) should imply (RR). This implication can easily be derived directly as follows.
Note that (\ref{pos}) is equivalent to $\sum_{n\ge 0}P\left[\|Y_1\|> ye^{m\la}\right]<\infty$. This in turn is equivalent to $\prod_{m\ge 0} P\left[\|Y_1\|\le ye^{m\la}\right]>0$, which implies (RR).
 }
\end{remark}

Let us now describe how the present article is organized. In the next section we provide additional definitions
and collect some elementary statements. 
In Section \ref{ce}  we first treat the special case of constant,  deterministic environments because in this case our proof is shorter and requires weaker assumptions than in the genuinely random case. 
We also provide an application to so-called frog  processes with geometric lifetimes.
 The general case of random environments is dealt with in Section \ref{re}, where we also give an application to random walks in random environments perturbed by cookies.
In the appendix we collect some general bounds which we need in Section \ref{re} but were not able to find in the literature.

\section{Preliminaries}

\subsection{Notation}
The $\ell_p$-vector norms ($1\le p\le\infty$) on $\R^d$ and their associated matrix norms are denoted by $\|\cdot\|_{p}$. We abbreviate $\|\cdot\|_\infty$ by $\|\cdot\|$. Recall that for a matrix $A$, $\|A\|$ is the maximum row sum and  $\|A\|_1$ is the maximum column sum.
The $i$-th coordinate of a vector $x$ is denoted by $[x]_i$ and 
the $(i,j)$-th entry of a matrix $A$ by $[A]_{i,j}$.
For $x,y\in[0,\infty)^d$ we write $x\le y$ (or $y\ge x$) iff $[x]_i\le [y]_i$ for all $i=1,\ldots,d$.
By  $c_1,c_2,\ldots$ we mean  suitable strictly positive and finite constants which may depend on other constants.

\subsection{Branching processes}
While branching processes are most often defined and studied in terms of generating functions we prefer to use a different, but equivalent definition which allows us to couple the branching process in a natural way to the AR(1) process introduced above, see (\ref{hust}) and (\ref{hust2}) below.

Let $d\ge 1$ and let $\Phi$ be the set of all measurable functions $\psi:[0,1]\to\N_0^d$. An {\em environment} for a multitype Galton-Watson branching process is a sequence $(\psi_n)_{n\ge 1}=((\psi_n^j)_{j=1,\ldots,d})_{n\ge 1}\in (\Phi^d)^{\N}$. Here $\psi_n$ determines the reproduction behavior of the individuals in the $(n-1)$-st generation, namely, if $U$ is distributed uniformly on $[0,1]$ then $P[\psi_n^j(U)=(x_1,\ldots,x_d)]$ is interpreted as the probability that an individual of type $j$ in the $(n-1)$-st generation gets $x_i$ children of type $i$, $i=1,\ldots,d$.

Let $\Psi=(\Psi_n)_{n\ge 1}=((\Psi_n^j)_{j=1,\ldots,d})_{n\ge 1}$ be an i.i.d.\ sequence of  $\Phi^d$-valued random variables, called the {\em random environment} for the branching process, and let $Y=(Y_n)_{n\ge 0}$ be a sequence of $[0,\infty)^{d}$-valued random vectors  such that $(\Psi_n,Y_n)_{n\ge 1}$ is i.i.d.\ and independent of $Y_0$. The vector $\lfloor Y_n\rfloor$ of integer parts of the components of $Y_n$ gives the numbers of {\em immigrants} of the $d$ possible types who join the population at time $n$.
Moreover, let $(U_{m,n,k}^j)_{0\le m< n; 1\le k; 1\le j\le d}$ be an i.i.d.\ family of random variables which are distributed uniformly on $[0,1]$. Assume that this family is independent of $\Psi$ and $Y$.
Set 
\begin{equation}\label{aij}
\xi_{m,n,k}^{i,j}:=[\Psi_n^j(U_{m,n,k}^j)]_i.
\end{equation}
We interpret $\xi_{m,n,k}^{i,j}$ as the (random) number of children of type $i$ of the $k$-th individual of type $j$ in generation $n-1$ whose ancestors immigrated at time $m$, provided that there are at least  $k$ individuals of this kind.
Define for all $m\ge 0$ the process $(B_{m,n})_{n\ge m}$ by setting $B_{m,m}:=\lfloor Y_m\rfloor$  and 
\[ 
B_{m,n}:=\left(\sum_{j=1}^d\sum_{k=1}^{[B_{m,n-1}]_j}\xi_{m,n,k}^{i,j}\right)_{i=1,\ldots,d},\ n\ge m+1.
\]
 Here $[B_{m,n}]_j$ stands for the number of individuals of type $j$ at time $n$ which descended from  the individuals who immigrated at time $m$.
Then the process $(Z_n)_{n\ge 0}$
defined by $Z_0:=\lfloor Y_0\rfloor$ and 
 \begin{equation}\label{ZZ2}
 Z_{n}=\sum_{m=1}^nB_{m,n},\quad n\ge 1,
\end{equation}
(or any other process with the same distribution) is called a {\em branching process with immigration $\lfloor Y\rfloor$ in the random environment $\Psi$.}
Here $[Z_n]_j$ is the number of individuals of type $j$ present at time $n$. 
The random matrix 
 \[A_n:=\left(E\left[\xi_{0,n,1}^{i,j}\mid \Psi\right]\right)_{i,j=1,\ldots,d}\]
contains at position $(i,j)$ the expected value, given the environment,  of the number of children of type $i$ of a  member of type $j$ of the $(n-1)$-st generation. 
As above in the definition of the AR(1) process $X$, the sequence  $(A_n,Y_n)_{n\ge 1}$ is i.i.d.\ and independent of $Y_0$.
It is well-known that for all $0\le m\le n$,
 \[E[B_{m,n}\mid\Psi,Y]=A_n\ldots A_{m+1}\lfloor Y_m\rfloor,\]
see for example 
\cite[Chapter II, (4.1)]{Har63}.
It follows from (\ref{note}), (\ref{ZZ2}),  and $\lfloor Y\rfloor\le Y$  that 
for all $n\ge 0$ a.s.
\begin{eqnarray}E[Z_n\mid \Psi, Y]&\le& X_n
\qquad\text{and} \label{hust}
\\
 E[Z_n\mid \Psi, Y]&=& X_n\qquad\text{if $Y_m\in\N_0^d$ a.s.\ for all $m\ge 0$.}\label{hust2}
\end{eqnarray}

\subsection{Recurrence and transience}\label{rat}
We use the notion of recurrence and transience of
$[0,\infty)^d$-valued Markov chains which was 
introduced by Kellerer in
\cite{Kel06} in a more general setting.
 Let $\mathcal H$ be the set of  functions from $[0,\infty)^d$ to $[0,\infty)^d$ which are monotone with respect to the partial order $\le$.
 Then a  $[0,\infty)^d$-valued Markov chain $(V_n)_{n\ge 0}$ is called {\em order-preserving} if it
fulfills a recursion of the form
 $V_{n}=H_n(V_{n-1})$ for an i.i.d.\ sequence $(H_n)_{n\ge 1}$ of $\mathcal H$-valued random variables which is independent of the initial value $V_0$. 
 Observe that all four processes $X,Z,M$, and $R$ defined above are order-preserving.

  Let $\pi$ be the transition kernel of such an order-preserving Markov chain.
  Then $\pi$ (and any Markov chain with transition kernel $\pi$) is 
called {\em irreducible} for the state space $[0,\infty)^d$ iff  for any $x\in [0,\infty)^d$ there is some $n\ge 0$ such that
$P[V_n\ge x]>0$, where $V=(V_n)_{n\ge 0}$ is a Markov chain with kernel $\pi$ starting at 0, see \cite[Definition 1.1]{Kel06}.
\begin{prop}\label{irr}
Let $K\in\N$ be such that $P[A_1\ldots A_K\in(0,\infty)^{d\times d}]>0$. Then the processes $X,Z,$ and $M$ are irreducible for the state space $[0,\infty)^d$.
\end{prop}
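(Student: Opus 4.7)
The plan is, for each $V \in \{X, Z, M\}$ started at $0$ and each $x \in [0,\infty)^d$, to exhibit an $n$ with $P[V_n \ge x] > 0$. Two preliminary observations drive everything. First, since $Y_1$ has unbounded support in $[0,\infty)^d$ and there are only $d$ coordinates, the pigeonhole principle yields some $i^* \in \{1,\ldots,d\}$ with $P[[Y_1]_{i^*} \ge c] > 0$ for every $c > 0$. Second, by the hypothesis and $\sigma$-additivity there is an $\epsilon > 0$ with $P[\min_{i,j}[A_1\cdots A_K]_{i,j} \ge \epsilon] > 0$; since $(A_n)_{n\ge 1}$ is i.i.d., the reversed-order product $B := A_{K+1}A_K\cdots A_2$ has the same distribution, and the event $F := \{\min_{i,j}[B]_{i,j} \ge \epsilon\}$ is independent of $Y_1$ because it depends only on $A_2,\ldots,A_{K+1}$.

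For the AR(1) process and the max-autoregressive process started at $0$, one has $X_{K+1} \ge B\,Y_1$ (dropping the other nonnegative summands in (\ref{note})) and $M_{K+1} \ge B\,Y_1$ (since this is one of the terms in the coordinatewise maximum in (\ref{den})). Setting $L := \|x\|/\epsilon$, on the positive-probability independent intersection $F \cap \{[Y_1]_{i^*} \ge L\}$ the $j$-th coordinate of $B\,Y_1$ is at least $[B]_{j,i^*}\,[Y_1]_{i^*} \ge \epsilon L \ge [x]_j$ for every $j$, which gives the claim for $X$ and $M$.

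For the branching process $Z$ the mean-process domination goes the wrong way (by (\ref{hust}), $X$ dominates the conditional mean of $Z$, not $Z$ itself), so one must exploit the branching structure directly. Started from $0$, one has $Z_{K+1} \ge \sum_{k=1}^{\lfloor[Y_1]_{i^*}\rfloor} D^{(k)}$, where $D^{(1)}, D^{(2)}, \ldots$ are the $\N_0^d$-valued descendant counts at time $K+1$ of the successive type-$i^*$ immigrants that arrived at time $1$. Conditional on $\Psi' := (\Psi_2,\ldots,\Psi_{K+1})$ these are i.i.d.\ with finite conditional mean $E[D^{(k)}_j \mid \Psi'] = [B]_{j,i^*}$. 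Moreover, since $(\Psi_n, Y_n)_{n\ge 1}$ is i.i.d., $Y_1$ is independent of $\Psi'$.

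Fixing a deterministic $L_0$ and conditioning, the independence of $Y_1$ and $\Psi'$ yields
\[
P[Z_{K+1} \ge x] \;\ge\; P\bigl[\lfloor[Y_1]_{i^*}\rfloor \ge L_0\bigr]\, E\Bigl[\mathbf{1}_F\, P\bigl[\textstyle\sum_{k=1}^{L_0} D^{(k)} \ge x \bigm| \Psi'\bigr]\Bigr].
\]
The first factor is positive by the choice of $i^*$. For the second, on $F$ the conditional SLLN gives $L_0^{-1}\sum_{k=1}^{L_0} D^{(k)}_j \to [B]_{j,i^*} \ge \epsilon$ a.s.\ for each $j$, so the inner conditional probability tends to $1$ as $L_0 \to \infty$, and dominated convergence makes the expectation positive for $L_0$ large. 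The main obstacle is this detour: a large single component of $Y_1$ does not by itself force large counts in every coordinate of $Z$, so one must first use the positive-matrix event to ensure that each type-$i^*$ ancestor can beget descendants of every type, and then employ a law-of-large-numbers amplification to meet the thresholds coordinate by coordinate.
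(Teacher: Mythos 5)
Your arguments for $X$ and $M$ match the paper's: discard all but the $m=1$ term in (\ref{note}) resp.\ (\ref{note3}), use the independent positive-probability event on which the entries of $B:=A_{K+1}\cdots A_2$ are bounded below by $\epsilon$, and make a coordinate of $Y_1$ large. Where you depart is the treatment of $Z$. The paper uses (\ref{hust2}) to rewrite $P[X_{K+1}\ge x]$ as $P[E[Z_{K+1}\mid\Psi,Y]\ge x]$ and concludes $P[Z_{K+1}\ge x]>0$ in one step. You correctly flag that for $d>1$ this implication is not automatic: a nonnegative random vector can have conditional mean $\ge x$ in every coordinate yet never satisfy all $d$ inequalities at once (a variable equal to $(2,0)$ or $(0,2)$ with probability $1/2$ each has mean $(1,1)$ but never dominates $(1,1)$). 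What rescues the conclusion is the multiplicity of immigrants and their conditional independence given $\Psi'$; your law-of-large-numbers amplification over the $L_0$ i.i.d.\ (given $\Psi'$) descendant vectors, restricted to the event $F$ on which every $[B]_{j,i^*}\ge\epsilon$, supplies exactly the joint lower bound on all $d$ coordinates that the conditional mean alone does not. The pigeonhole choice of a fixed $i^*$ and the independence of $(F,\Psi')$ from $Y_1$ are also correct. In short: the same skeleton as the paper for $X$ and $M$, but for $Z$ you give a more careful argument that actually closes a step the paper's one-line conclusion leaves implicit.
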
 
\begin{proof} Without loss of generality we assume that a.s.\ $Y_1\in\N_0^d$.
 Let $\mu$ be the minimum of the  entries of the matrix $A_{K+1} A_K\ldots A_2$ and choose $\eps>0$ such that $P[\mu\ge\eps]>0$. Then we have for all $x\in[0,\infty)^d$ due  (\ref{hust2})  and (\ref{note3}) that
 \begin{eqnarray*}P[E[Z_{K+1}\mid \Psi,Y]\ge x]&=&P[X_{K+1}\ge x]\ge P[M_{K+1}\ge x]\ \ge\ P[N_{K+1}\ge x]\\
 &\ge& P[A_{K+1} A_K\ldots A_2Y_1\ge x]
 \ \ge\ P\left[\mu\|Y_1\|\ge\|x\|,\mu\ge\eps\right]\\
 &\ge& P[\|Y_1\|\ge \|x\|/\eps] P[\mu\ge \eps]
 \end{eqnarray*}
 by independence. Therefore, $P[V_{K+1}\ge x]>0$ for all $V\in\{Z,X,M\}$. 
\end{proof}
If $\pi$ is irreducible then $\pi$  (and any Markov chain with transition kernel $\pi$) is called  
{\em recurrent} iff there exists $b\in(0,\infty)$ such that 
\begin{equation}\label{forall}\sum_{n\ge 0}P[\|V_n\|\le b]=\infty,
\end{equation}
 where $V$ is a Markov chain with kernel $\pi$ starting at 0. In fact, the initial state is not important here. Condition (\ref{forall}) holds either for all Markov chains with transition kernel $\pi$ or for none, see \cite[Definition 2.5]{Kel06}. 
 A Markov chain $V$ is recurrent iff there is a finite $b$ such that a.s.\ $\|V_n\|\le b$ infinitely often.
If $\pi$ is not recurrent then it is called {\em transient}. 
Transience is equivalent to the almost sure  divergence of the Markov chain in all components to $\infty$, see \cite[Section 2]{Kel06}.
(For the definition and characterization of positive recurrence in this context see \cite[Section 6]{Kel06}.)

In order to deduce from the recurrence of one process the recurrence of another process  we will need to infer from the divergence of a series of the form
$\sum_{n\ge 0}a_n$ the divergence of another series $\sum_{n\ge 0}b_n$. Sometimes we will do this by showing either $\sup_na_n/b_n<\infty$ or $\inf_nb_n/a_n>0$. Sometimes we shall use the following lemma instead.
\begin{lemma}\label{eth}
For $n\ge 0$ let $U_n$ and $V_n$ be $\R^d$-valued random variables.   Assume that there are $b,c>0$ such that $\sum_nP[\|U_n\|\le b]=\infty$ and  $E[\|V_n\|; \|U_n\|\le b]\le cP[\|U_n\|\le b]$ for all $n\ge 0$. Then $\sum_nP[\|V_n\|\le b]=\infty$.
\end{lemma}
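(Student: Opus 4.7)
The plan is a single application of Markov's inequality: the second hypothesis says that the conditional mean of $\|V_n\|$ on the event $\{\|U_n\|\le b\}$ is bounded by $c$, so provided $b>c$, Markov forces $\|V_n\|$ to remain below $b$ on a definite fraction of that event, and the conclusion is then obtained by summing over $n$ against the first hypothesis.

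Concretely, I would first bound
\[
P[\|V_n\|\le b]\;\ge\;P[\|V_n\|\le b,\,\|U_n\|\le b]\;=\;P[\|U_n\|\le b]\;-\;P[\|V_n\|>b,\,\|U_n\|\le b].
\]
To the last probability I would apply Markov's inequality to the nonnegative random variable obtained by restricting $\|V_n\|$ to $\{\|U_n\|\le b\}$, followed by the second hypothesis:
\[
P[\|V_n\|>b,\,\|U_n\|\le b]\;\le\;\frac{1}{b}\,E\bigl[\|V_n\|;\,\|U_n\|\le b\bigr]\;\le\;\frac{c}{b}\,P[\|U_n\|\le b].
\]
Substituting back gives the one-step lower bound
\[
P[\|V_n\|\le b]\;\ge\;\Bigl(1-\frac{c}{b}\Bigr)P[\|U_n\|\le b],
\]
and summing over $n$, using the first hypothesis $\sum_n P[\|U_n\|\le b]=\infty$, delivers the stated conclusion $\sum_n P[\|V_n\|\le b]=\infty$ with the original threshold $b$.

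The only genuine obstacle is ensuring that the prefactor $1-c/b$ is strictly positive, i.e.\ that $b>c$. This positivity is implicit in the hypotheses (and trivial example shows the bound is sharp: if $\|V_n\|\equiv c$ and $\|U_n\|\equiv 0$ then the hypotheses hold for every $b>0$ while the conclusion can fail for $b<c$). In all intended invocations of the lemma in Section \ref{re}, however, the constant $c$ is determined by parameters of the model while the recurrence threshold $b$ is existentially quantified, so $b$ can be taken strictly larger than $c$ from the outset; with that choice the Markov calculation above gives the claim verbatim. There is no further work required beyond the two-line bound.
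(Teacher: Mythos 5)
Your argument is essentially the paper's: a single application of Markov's inequality on the event $\{\|U_n\|\le b\}$, converting the conditional first-moment bound into a lower bound on $P[\|V_n\|\le\text{threshold}]$ that is comparable to $P[\|U_n\|\le b]$, and then summing. The one genuine difference is the choice of threshold. You insist on keeping the threshold $b$ from the hypothesis, which produces the prefactor $1-c/b$ and therefore requires $b>c$; you correctly observe (and illustrate with $\|V_n\|\equiv c$, $\|U_n\|\equiv 0$) that the statement is false otherwise, and you argue that in the applications one can arrange $b>c$. The paper instead applies Markov against the fixed cut-off $2c$, obtaining
\[
P[\|V_n\|\le 2c]\;\ge\;P[\|U_n\|\le b]-\tfrac{1}{2c}\,E[\|V_n\|;\|U_n\|\le b]\;\ge\;\tfrac12\,P[\|U_n\|\le b],
\]
and so establishes $\sum_n P[\|V_n\|\le 2c]=\infty$ with no side condition at all. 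Strictly speaking this proves the conclusion with $b$ replaced by $2c$, a small imprecision in the lemma's wording that is harmless for the applications: the lemma is only invoked to verify recurrence in the sense of (\ref{forall}), where the threshold is existentially quantified, so any fixed finite threshold will do. In short, your route and the paper's are the same two-line Markov computation; the paper's choice of cut-off $2c$ removes the hidden hypothesis $b>c$ that your version needs, and thereby avoids the counterexample you noticed, while also quietly shifting the threshold in the conclusion.
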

\begin{proof}
By Markov's inequality,
\begin{eqnarray*}
P[\|V_n\|\le 2c]&\ge& P[\|V_n\|\le 2c, \|U_n\|\le b]\\
&=&P[\|U_n\|\le b]-P[\|V_n\|> 2c, \|U_n\|\le b]\\
&\ge& P[\|U_n\|\le b]-\frac{E[\|V_n\|; \|U_n\|\le b]}{2c}\ge \frac{P[\|U_n\|\le b]}2,
\end{eqnarray*}
which is not summable in $n$ by assumption. 
\end{proof}

\section{Constant environment}\label{ce}
Recall that a matrix $A\in[0,\infty)^{d\times d}$ is called {\em primitive} iff there is a $K\in\N$ such that $A^K\in(0,\infty)^{d\times d}$. 
\begin{theorem} {\em \bf (Subcritical case, constant environment)} \label{rtd} 
Assume that there is a  primitive matrix $A$ with spectral radius $\vr<1$ such that a.s.\ $A_n=A$ for all $n\ge 1$. 
Let $y\in(0,\infty)$ be such that $P[\|Y_1\|\le y]>0$. Then the following three assertions are equivalent.
\begin{align}
  \tag{XR} &\text{The autoregressive processes $X$ is recurrent.}\\
  \tag{MR} &\text{The max-autoregressive process $M$ is recurrent.}\\
 \tag{RC} &\sum_{n\ge 0}\prod_{m=0}^{n} P\left[\|Y_1\|\le y\vr^{-m}\right]=\infty.
 \end{align}
 If we assume in addition that 
there is a $\psi\in\Phi^d$ such that a.s.\ $\Psi_n=\psi$ for all $n\ge 1$ 
and that $E[\xi_{0,1,1}^{i,j}\ln\xi_{0,1,1}^{i,j}]<\infty$ for all $i,j\in\{1,\ldots,d\}$
then {\rm (XR), (MR)}, and {\rm (RC)} are equivalent to the following assertion.
 \begin{align}
  \tag{ZR} \text{The branching process with immigration $Z$ is recurrent.}
 \end{align}
\end{theorem}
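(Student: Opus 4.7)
My plan is to prove the chain (XR) $\Rightarrow$ (MR) $\Leftrightarrow$ (RC) $\Rightarrow$ (XR) first, and then (ZR) $\Leftrightarrow$ (XR) under the additional hypotheses.

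The implication (XR) $\Rightarrow$ (MR) is immediate from $X_n \ge M_n$ coordinatewise in (\ref{note3}): $\{\|X_n\|\le b\}\subseteq\{\|M_n\|\le b\}$, and the criterion (\ref{forall}) transfers. For (MR) $\Leftrightarrow$ (RC) I run both chains from $0$ so that $\|M_n\| = \max_{1\le m\le n}\|A^{n-m}Y_m\|$ (the $\ell_\infty$-norm of a coordinatewise maximum of nonnegative vectors is the max of the norms); by independence
\[
P[\|M_n\|\le b] = \prod_{k=0}^{n-1} P[\|A^k Y_1\|\le b].
\]
Primitivity of $A$ together with Perron--Frobenius yields $c_1, c_2>0$ with $c_1\vr^k\|y\|\le\|A^k y\|\le c_2\vr^k\|y\|$ for $y\in[0,\infty)^d$ and $k\ge k_0$, which sandwiches each factor between $P[\|Y_1\|\le(b/c_2)\vr^{-k}]$ and $P[\|Y_1\|\le(b/c_1)\vr^{-k}]$. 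A short index-shift argument, using $P[\|Y_1\|\le y]>0$ to absorb the finitely many boundary factors that are bounded strictly between $0$ and $1$, then shows that the (RC) series has the same convergence class for every admissible $y$, which gives the equivalence.

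The main work is (MR) $\Rightarrow$ (XR). I plan to apply Lemma \ref{eth} with $U_n := M_n$ and $V_n := X_n$. Using the conditional independence of the $Y_m$ on the event $\{\|M_n\|\le b\}$ and factoring the product,
\[
\frac{E\bigl[\|X_n\|\,;\,\|M_n\|\le b\bigr]}{P[\|M_n\|\le b]} \le \sum_{k=0}^{n-1} E\bigl[\|A^k Y_1\|\bigm|\|A^k Y_1\|\le b\bigr],
\]
so the task reduces to bounding the right-hand side uniformly in $n$. I would write each numerator as $\int_0^b P[t<\|A^k Y_1\|\le b]\,dt$, use the Perron--Frobenius bounds to replace $\|A^k Y_1\|$ by $\|Y_1\|$ on the scale $B_k := b/(c_1\vr^k)$, rescale $t = sB_k$, and interchange summation and integral to obtain
\[
\sum_{k\ge 0} E\bigl[\|A^k Y_1\|\,;\,\|A^k Y_1\|\le b\bigr] \le \frac{c_2 b}{c_1}\int_0^1 \sum_{k\ge 0} P[sB_k<\|Y_1\|\le B_k]\,ds.
\]
The crucial observation is that the inner sum counts the indices $k$ for which $\|Y_1\|\in (sB_k, B_k]$; since $(B_k)$ is a geometric progression with ratio $\vr^{-1}$, this count is at most $\lceil\log(1/s)/|\log\vr|\rceil + 1$ pointwise in $\|Y_1\|$, hence integrable on $(0,1)$. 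Provided $b$ is large enough that $\inf_k P[\|A^k Y_1\|\le b]\ge p_0 > 0$ (arranged by picking $b\ge c_2 y_0$ for some $y_0$ with $P[\|Y_1\|\le y_0]>0$), the conditional-expectation sum is bounded by a constant $c$, and Lemma \ref{eth} yields $\sum_n P[\|X_n\|\le 2c]=\infty$, giving (XR).

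For the equivalence with (ZR) I use the coupling identities (\ref{hust}) and (\ref{hust2}), so that when $Y$ is $\N_0^d$-valued, $E[Z_n\mid\Psi,Y]=X_n$. The direction (XR) $\Rightarrow$ (ZR) is a Markov inequality: on $\{\|X_n\|\le b\}$, $E[\|Z_n\|\mid Y]\le d\|X_n\|\le db$, hence $P[\|Z_n\|\le 2db\mid Y]\ge 1/2$, and summing gives recurrence of $Z$. The reverse (ZR) $\Rightarrow$ (XR) requires a matching lower bound --- that with positive conditional probability $\|Z_n\|\ge c\|X_n\|$ --- delivered by a Kesten--Stigum-type anti-concentration estimate whose proof uses the hypothesis $E[\xi_{0,1,1}^{i,j}\log\xi_{0,1,1}^{i,j}]<\infty$ to prevent the coupled branching populations $B_{m,n}$ from falling much below their conditional means. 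The main obstacle I anticipate is promoting this Kesten--Stigum control from its usual a.s.\ form to a uniform-in-$n$ statement applicable to all of the families $B_{m,n}$ simultaneously; the double-sum bookkeeping in the (MR) $\Rightarrow$ (XR) step is delicate but routine once the Fubini rearrangement and geometric-shell counting above are in place.
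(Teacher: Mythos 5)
Your chain decomposition is the right shape, and your treatment of (XR)$\Rightarrow$(MR) and (XR)$\Rightarrow$(ZR) matches the paper; the Fubini/shell-counting bound you sketch for the uniform estimate (instead of the paper's stopping-time argument using $T=\inf\{m:\|A^mY_1\|\le b\}$) is a genuinely different and perfectly good way to control $\sum_{k}E[\|A^kY_1\|\mid\|A^kY_1\|\le b]$. But there are two real gaps.

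First, the identity $\|M_n\|=\max_{1\le m\le n}\|A^{n-m}Y_m\|$ is false for $d\ge 2$. The parenthetical justification (``$\ell_\infty$-norm of a coordinatewise max is the max of the norms'') is correct, but it establishes $\|N_n\|=\max_m\|A^{n-m}Y_m\|$ for $N_n:=\max_m A^{n-m}Y_m$, not for $M_n$. Because the max does not commute with matrix multiplication (for nonnegative $A$, $A\max\{x,y\}\ge\max\{Ax,Ay\}$ strictly in general), one only has $M_n\ge N_n$. Consequently $\{\|M_n\|\le b\}$ is strictly contained in $\bigcap_m\{\|A^{n-m}Y_m\|\le b\}$, and neither the factorization $P[\|M_n\|\le b]=\prod_kP[\|A^kY_1\|\le b]$ nor the conditional-independence step you use to get the bound in Lemma~\ref{eth} with $(U,V)=(M,X)$ goes through. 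The fix is to introduce the auxiliary process $N_n$ and the condition (NR): then (XR)$\Rightarrow$(MR)$\Rightarrow$(NR) is monotone, (NR) is a genuine product and equivalent to (RC), and your Fubini/shell-counting argument applied to $U_n=N_n$ closes the loop via Lemma~\ref{eth}. This is exactly what the paper does.

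Second, the sketch of (ZR)$\Rightarrow$(XR) via a Kesten--Stigum anti-concentration bound of the form ``$\|Z_n\|\ge c\|X_n\|$ with positive conditional probability'' does not suffice. If $P[\|Z_n\|\le b\mid\Psi,Y]\le 1-p$ whenever $\|X_n\|>b'$, then unconditionally $P[\|Z_n\|\le b]\le P[\|X_n\|\le b']+(1-p)$, and the additive $(1-p)$ is fatal: in the null-recurrent regime $P[\|Z_n\|\le b]\to 0$, so you cannot extract a comparable lower bound on $P[\|X_n\|\le b']$ and sum. What is actually needed is control of the $n$-step survival probability, and the paper obtains this by proving (ZR)$\Rightarrow$(RC) directly: it truncates $\widetilde Z_n:=\sum_{m\le n-\ell}B_{m,n}$, uses Joffe--Spitzer asymptotics (under the $\xi\ln\xi$ moment condition) to show that $\vr^{-k}(1-q_{j,k})$ has a positive finite limit, represents $P[\widetilde Z_n=0]$ as a product of Laplace-type integrals of the extinction probabilities, and compares it to $\prod_m G(y\vr^{-m})$ by a telescoping estimate. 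This is a substantially different mechanism from anti-concentration, and it is the one that actually matches divergence of the series to recurrence of $Z$.
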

\begin{proof}
By Proposition \ref{irr}, $X,Z$, and $M$ are irreducible since $A$ is primitive. When checking (XR), (MR), and (ZR) we assume without loss of generality that $Y_0$ has the same distribution as $Y_n,\ n\ge 1$, since the initial state does not matter, see Section \ref{rat}.
Recall from Perron-Frobenius theory, see e.g.\   \cite[Appendix, Theorem 2.3]{KT75}, that there is a matrix $H\in(0,\infty)^{d\times d}$ such that 
\begin{equation}\lim_{n\to\infty}\vr^{-n}A^n=H.
 \label{limi}
\end{equation}
We consider the following auxiliary conditions. (Recall (\ref{note3}) for the definition of $N_n$.)
\begin{itemize}
 \item[(NR)] There exists $b\in(0,\infty)$ such that 
$\sum_{n\ge 0}P[\|N_n\|\le b]=\infty$.
\item[(RC')] There exists $b\in(0,\infty)$ such that 
 ${\sum_{n\ge 0}\prod_{m=0}^{n} P\left[\|Y_1\|\le b\vr^{-m}\right]=\infty.}$
\end{itemize}
We shall prove the equivalence of the conditions (XR), (MR), (ZR), (RC), (NR), and (RC') as indicated in the following diagram.
\[\begin{pspicture}(0,0.7)(4,3.3)
\rput(0,2){(XR)}
\rput(1,2){$\Longleftarrow$}
\rput(2,2){(NR)}
\rput(2,1){(ZR)}
\rput{30}(1,2.5){$\Longrightarrow$}
\rput{330}(1,1.5){$\Longrightarrow$}
\rput(3,2){$\Longleftrightarrow$}
\rput(3,1){$\Longrightarrow$}
\rput(4,2){(RC')}
\rput(4,1.5){$\Updownarrow$}
\rput(4,1){(RC)}
\rput(2,3){(MR)}
\rput(2,2.5){$\Downarrow$}
\end{pspicture}
\]
\underline{(XR)$\Rightarrow$(MR)$\Rightarrow$(NR):} This  implication follows from 
(\ref{note3}). \\
\underline{(NR)$\Rightarrow$(XR):}
Due to (\ref{limi}),
\begin{equation}\label{su}
 \sum_{n\ge 0}\|A^n\|<\infty.
 \end{equation}
 Therefore and by (NR), one can choose $b\in(0,\infty)$ large enough such that 
\begin{equation}\label{sun}
P[\|A^mY_1\|\le b]\ge 1/2\quad\text{for all $m\ge 0$} 
\quad\mbox{and}\quad
\sum_{n\ge 0}P\left[\|N_n\|\le b\right]=\infty.
\end{equation} 
Again by (\ref{su}),  $T:=\inf\{m\ge 0: \|A^mY_1\|\le b\}$ is a.s.\ finite. Moreover,  
\begin{equation}\|N_n\|=\max_{m=0}^n\|A^{n-m}Y_{m}\|\quad\text{ for all $n\ge 0.$}
\label{nn}
\end{equation}
 Therefore, by (\ref{note}) and since $Y$ is i.i.d.\ we have for all $n\ge 0$ that 
\begin{eqnarray*}
\lefteqn{E\left[\|X_n\|\mid \|N_n\|\le b\right]\le 
\sum_{m=0}^{n}E\left[\|A^{n-m}Y_{m}\|\ \Bigg|\ \bigcap_{i=0}^n\{\|A^{n-i}Y_{i}\|\le b\}\right]}\\
&=&\sum_{m=0}^{n}E\left[\|A^{n-m}Y_{m}\|\mid \|A^{n-m}Y_{m}\|\le b\right]\ =\ 
\sum_{m= 0}^{n}\frac{E[\|A^mY_1\|; \|A^mY_1\| \le b]}{P[\|A^mY_1\|\le b]}\\
&\overset{(\ref{sun})}\le& 2E\left[\sum_{m\ge T} \|A^mY_1\|\right]\ =\ 2E\left[\sum_{m\ge 0} \|A^{m+T}Y_1\|\right]
\ \le\ 2E\left[\left(\sum_{m\ge 0}\|A^m\|\right)\|A^T Y_1\|\right]\\
&\le& 2b\sum_{m\ge 0}\|A^m\|\overset{(\ref{su})}<\infty.
\end{eqnarray*}
Applying Lemma \ref{eth} to $(U,V)=(N,X)$ we obtain the claim (XR).

\underline{(NR)$\Leftrightarrow$(RC'):}  Since $Y$ is i.i.d.\ and (\ref{nn}) holds, (NR) is satisfied iff there is a $b\in(0,\infty)$ such that
 $\prod_{m=0}^{n} P\left[\|A^mY_1\|\le b\right]$ is not summable.
Due to (\ref{limi}) there exist $k,L\in\N$ such that $k^{-1}\vr^m\le [A^m]_{i,j}\le k\vr^m$ for all $m\ge L$ and all $i,j=1,\ldots,d$.
Therefore, $k^{-1}\vr^m\|Y_1\|\le \|A^mY_1\|\le dk\vr^m\|Y_1\|$
for all $m\ge L$.
This implies the claim.

\underline{(RC')$\Leftrightarrow$(RC):} The implication $\Leftarrow$ is trivial. For the reverse implication  let $b\in(0,\infty)$  be according to (RC') and $k\in\N_0$ be such that $y\vr^{-k}\ge b$. Then
\begin{eqnarray*}\lefteqn{
 \sum_{n\ge 0}\prod_{m=0}^{n} P\left[\|Y_1\|\le y\vr^{-m}\right]\ \ge\ 
 \sum_{n\ge k}\prod_{m=0}^{n} P\left[\|Y_1\|\le y\vr^{-m}\right]}\\
 &=&\left(\prod_{m=0}^{k-1} P\left[\|Y_1\|\le y\vr^{-m}\right]\right)\sum_{n\ge 0}\prod_{m=0}^{n} P\left[\|Y_1\|\le \left(y\vr^{-k}\right)\vr^{-m}\right]=\infty.
\end{eqnarray*}

\underline{(XR)$\Rightarrow$(ZR):}
Let $b\in(0,\infty)$ be such that $\sum_nP[\|X_n\|\le b]=\infty$. 
Note  that for all $x\in[0,\infty)^d$,
$\|x\|\le \|x\|_1=x_1+\ldots+x_n\le d\|x\|.$
Therefore,
\begin{eqnarray*}
E[\|Z_n\|; \|X_n\|\le b]&\le&E\left[\|Z_n\|_1; \|X_n\|\le b\right]=\|E[E[Z_n; \|X_n\|\le b\mid \Psi, Y]]\|_1\\
&=&\|E[E[Z_n\mid \Psi, Y]; \|X_n\|\le b]\|_1\ \overset{(\ref{hust})}\le\  \|E[X_n; \|X_n\|\le b]\|_1\\
&\le& E[d\|X_n\|; \|X_n\|\le b]
\ \le\ bdP[\|X_n\|\le b].
\end{eqnarray*}
Lemma \ref{eth} applied to $(U,V)=(X,Z)$ implies (ZR).

\underline{(ZR)$\Rightarrow$(RC):} 
Denote by $q_{j,\ell}$ the probability that a given individual of type $j$ does not have any descendants $\ell$ generations later, i.e.\ with a slight abuse of notation, $q_{j,\ell}:=P[B_{0,\ell}=0\mid \lfloor Y_1\rfloor =e_j]$, where $e_j\in\Z^d$ is the $j$-th standard unit vector. 
Due to the moment assumption on $\xi_{0,1,1}^{i,j}$ and  \cite[Theorems 2 (3.6) and 4]{JS67}, $\vr^{-k}(1-q_{j,k})$ tends as $k\to\infty$ for all $j=1,\ldots,d$, to a strictly positive and finite limit. In particular, there are $c,\ell\in\N$ such that 
\begin{equation}
 \label{da}
  c(1-q_k)\ge \vr^k\ \text{ for  all $k\ge \ell$, where }\ q_k:=\max_{j=1}^dq_{j,k}.
\end{equation}
Set $\wZ_n:=\sum_{m=0}^{n-\ell}B_{m,n}$ for $n\ge \ell$. By (ZR) and subadditivity there is some $z\in\N_0^d$ such that $\sum_{n}P[Z_n=z]=\infty$. 
Then on the one hand by the Markov property and independence,
\begin{equation}
\sum_{n\ge \ell}P[\wZ_{n}=0]\ge \sum_{n\ge \ell}P[Z_{n-\ell}=z]\prod_{j=1}^dq_{j,\ell}^{[z]_j}=\infty\label{gsf}
\end{equation}
by our choice of $z$ and $\ell$.
On the other hand, since the processes $(B_{m,m+n})_{n\ge 0}$, $m\ge 0$, are i.i.d., we have for all $n\ge \ell$,
\begin{eqnarray*}\nonumber\lefteqn{
 P[\wZ_n=0]= P\left[\bigcap_{m=0}^{n-\ell}\{B_{m,n}=0\}\right]
 \ =\ \prod_{k=\ell}^{n}E[P[B_{0,k}=0|Y_1]]=\prod_{k=\ell}^{n}E\left[\prod_{j=1}^dq_{j,k}^{\lfloor [Y_1]_j\rfloor}\right]}\\
 & \le& \prod_{k=\ell}^{n}E\left[q_{k}^{\|\lfloor Y_1\rfloor\|}\right]
 \ = \prod_{k=\ell}^{n}\int_0^1P\left[q_{k}^{\|\lfloor Y_1\rfloor\|}\ge t\right]\ dt \overset{(\ref{da})}\le 
 \prod_{k=\ell}^{n}\int_0^1 G(c(-\ln t)\vr^{-k})\ dt,
\end{eqnarray*}
 where $G$ is the cumulative distribution function of $\|\lfloor Y_1\rfloor\|$.
Note that $G(y)>0$ and set $\bar G:=1-G$.
 Then by the above for all $n\ge \ell$,
\begin{eqnarray}\nonumber
\frac{P[\wZ_n=0]}{\prod_{m=\ell}^nG(y\vr^{-m})}
&\le& \prod_{m=\ell}^{n}\int_0^1\frac{G\left(c(-\ln t)\vr^{-m}\right)}{G(y\vr^{-m})}\ dt\\ \nonumber
&=&\prod_{m=\ell}^{n}\left(1+\int_0^1\frac{\bar G(y\vr^{-m})-\bar G\left(c(-\ln t)\vr^{-m}\right)}{G(y\vr^{-m})}\ dt\right)\\
&\le&\nonumber
\exp\left(G(y)^{-1}\sum_{m=\ell}^{n}\int_0^1\left(\bar G(y\vr^{-m})-\bar G\left(c(-\ln t)\vr^{-m}\right)\right)_+\ dt\right)\\
&=&\exp\left(G(y)^{-1}\int_0^{\exp\left(-y/c\right)}\sum_{m=\ell}^n\bar G(y\vr^{-m})-\bar G\left(c(-\ln t)\vr^{-m}\right)\ dt\right). \label{col}
\end{eqnarray}
We set $f(t):=\left(\ln (c/y)+\ln(-\ln t)\right)/(-\ln\vr)$ and use the telescopic form of the sum in (\ref{col}) for estimating this sum for all $t\in (0,e^{-y/c})$ from above by
\begin{eqnarray}&&\sum_{m=\ell}^{n\vee(\ell+\lceil f(t)\rceil)}\bar G(y\vr^{-m})-\bar G\left(c(-\ln t)\vr^{-m}\right)\nonumber\\
&\le& \lceil f(t)\rceil+1+\label{umb}
\sum_{m=\ell}^{(n-\lceil f(t)\rceil)\vee\ell}\bar G(y\vr^{-(m+\lceil f(t)\rceil )})-\bar G\left(c(-\ln t)\vr^{-m}\right)\le f(t)+2
\end{eqnarray}
since all the differences in (\ref{umb}) are nonpositive. Since $\int_0^{\exp\left(-y/c\right)}f(t)\ dt<\infty$, the right-hand side of (\ref{col}) is  bounded from above uniformly in $n$.
Therefore, (\ref{gsf}) implies that 
 $\prod_{m=\ell}^nG(y\vr^{-m})$ is not summable.  Since 
 $G(x)\le P[\|Y_1\|\le 2x]$
for all $x\ge 1$,
(RC) follows.
\end{proof}
\subsection{An application to mortal frog processes} For a survey on frog processes we refer to \cite{Pop03}. 
The following application is related to \cite[Theorem 4.3]{Pop03}.
Let $(Y_n)_{n\ge 0}$ be an i.i.d.\ sequence of $\N_0$-valued random variables. Put on each $n\ge 0$ a number $Y_n$ of sleeping frogs. Fix $p,r\in (0,1)$.
Wake up the frogs at 0 (if there are any). Once woken up, every frog performs a nearest-neighbor random walk, jumping independently of everything else with probability $r$ to the right and with probability $1-r$ to the left, until it dies after an independent number of steps which is geometrically distributed with parameter $1-p$ and may be 0.
Whenever a frog visits a site with sleeping frogs those frogs are woken up as well and start their own independent lives.
\begin{theorem}\label{frog}
Let $y\in(0,\infty)$ be such that $P[Y_0\le y]>0$.
Then the following statements are equivalent.
\begin{align}\label{f1}
 &\text{Almost surely only finitely many different frogs visit 0.}\\ \label{f2}
 &\text{Almost surely only finitely many frogs are woken up.}\\ \label{f3}
 & \sum_{n\ge 0}\prod_{m=0}^nP\left[Y_0\le y\vr^{-m}\right]=\infty,\quad\text{where}\quad
 \vr:=\frac{1-\sqrt{1-4p^2r(1-r)}}{2p(1-r)}.
  \end{align}
\end{theorem}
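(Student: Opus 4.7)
The plan is to relate the frog process to a one-type branching process with immigration in order to apply Theorem~\ref{rtd}. Let $F_n$ denote the number of distinct frogs originating at some site in $\{0,1,\ldots,n\}$ that ever visit site~$n$. A first-step analysis for a single frog (dying immediately with probability $1-p$, otherwise stepping right or left) shows that its probability of ever reaching the neighbouring site to its right satisfies $\vr=pr+p(1-r)\vr^2$, whose smaller root is the constant in (\ref{f3}). Combining memorylessness of the geometric lifetime with the strong Markov property, conditional on $F_{n-1}$ each of the $F_{n-1}$ frogs at $n-1$ independently reaches $n$ with probability~$\vr$, so $F_n=B_n+Y_n\won\{B_n\ge 1\}$ with $B_n\mid F_{n-1}\sim\mathrm{Bin}(F_{n-1},\vr)$ and $F_0=Y_0$. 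In particular (\ref{f2}) is equivalent to $F_n=0$ for sufficiently large~$n$.

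Introduce the auxiliary chain $\tilde F_0:=Y_0$, $\tilde F_n:=\mathrm{Bin}(\tilde F_{n-1},\vr)+Y_n$, obtained by dropping the indicator. This is a subcritical single-type Galton--Watson process with Bernoulli$(\vr)$ offspring and immigration $(Y_n)$; the scalar $A=\vr<1$ is trivially primitive with spectral radius~$\vr$, and the moment condition on the offspring in Theorem~\ref{rtd} is automatic for Bernoulli variables. Theorem~\ref{rtd} therefore gives that $\tilde F$ is recurrent if and only if (\ref{f3}) holds. Coupling with the same Bernoulli trials yields $F_n\le\tilde F_n$; hence if $\tilde F$ is recurrent then $\tilde F_n=0$ infinitely often and $F_n=0$ eventually (since $0$ is absorbing for $F$), which proves (\ref{f3})$\Rightarrow$(\ref{f2}). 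The implication (\ref{f2})$\Rightarrow$(\ref{f1}) is immediate, since a frog can visit~$0$ only after being woken.

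For (\ref{f1})$\Rightarrow$(\ref{f3}) I argue the contrapositive. If (\ref{f3}) fails then $\tilde F$ is a transient irreducible $\N_0$-valued chain, so starting from $Y_0\ge 1$ there is positive probability of never returning to~$0$; on this survival event~$S$ one has $F=\tilde F>0$ identically, so every site is activated. By Remark~\ref{rm}, failure of~(\ref{f3}) also forces $E[\ln_+Y_1]=\infty$, equivalently $\sum_nP[Y_1\ge c^{-n}]=\infty$ for every $c\in(0,1)$. The symmetric first-step analysis gives the probability $\vr':=(1-\sqrt{1-4p^2r(1-r)})/(2pr)\in(0,1)$ that a frog at $n$ reaches $n-1$, and Markov and memorylessness yield that a single woken frog at site $n$ visits~$0$ with probability~$(\vr')^n$.

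To conclude I must show that on $S$ infinitely many distinct frogs visit~$0$ almost surely. For each of the $Y_n$ woken frogs at site~$n$ its walk independently determines whether it visits~$n+1$ and whether it visits~$0$, and I let $L_n$ count those frogs that visit~$0$ but not~$n+1$. Since such frogs do not contribute to the right-going cascade, an analysis of the four-way classification (``visits $n+1$'' $\times$ ``visits~$0$'') shows that, conditional on $(Y_n)$ and on the numbers of non-right-going frogs at each site, the variables $(L_n)_{n\ge 1}$ are independent across $n$, have distribution $\mathrm{Bin}(\cdot,q_n)$ with $q_n$ of order $(\vr')^n$, and are independent of the survival event $S$. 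Since $\sum_nP[Y_1\ge(\vr')^{-n}]=\infty$ by the previous paragraph, a conditional Borel--Cantelli lemma applied on $S$ yields $L_n\ge 1$ for infinitely many~$n$ almost surely on $S$; hence infinitely many distinct frogs visit~$0$ with positive probability, contradicting (\ref{f1}). The main obstacle is precisely this final step---disentangling the left-walking contributions to visits at~$0$ from the right-walking cascade of activations---for which the four-way classification of each frog is the natural device.
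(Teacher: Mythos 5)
Your proposal follows the same overall strategy as the paper: identify a single-type subcritical Galton--Watson process with Bernoulli$(\vr)$ offspring and immigration $(Y_n)$ built from the right-going cascade of frogs, apply Theorem~\ref{rtd} to it, and translate its recurrence into statement (\ref{f2}). Your auxiliary chain $\tilde F$ is precisely the paper's shadow process $Z_n$ obtained from pre-assigned trajectories. However, there are two genuine gaps in the details.

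First, to pass from recurrence of $\tilde F$ to (\ref{f2}) you claim that recurrence of $\tilde F$ gives $\tilde F_n=0$ infinitely often. The theorem assumes only $P[Y_0\le y]>0$ for some $y$, not $P[Y_0=0]>0$, so it may happen that $\tilde F_n\ge Y_n\ge 1$ for all $n$ and $\tilde F$ never visits the state $0$, even though it is recurrent in Kellerer's sense. The object to examine is the branching part $\tilde B_n:=\tilde F_n-Y_n$: recurrence gives $\tilde F_n\le b$ infinitely often for some $b$, and since $P[\tilde B_{n+1}=0\mid \tilde F_n\le b]\ge(1-\vr)^b>0$ a conditional Borel--Cantelli argument yields $\tilde B_n=0$ for some $n$ a.s.; the coupling then forces $B_n=0$, hence $F_n=0$ eventually. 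This is exactly what the paper encodes in the statement ``$Z_n-Y_n=0$ for some $n$ a.s.'' The same confusion undermines the first step of your $\neg(\ref{f3})\Rightarrow\neg(\ref{f1})$ argument: on your survival event $S$ of the chain $\tilde F$ you have $\tilde F_n>0$ for all $n$, but this does not imply $\tilde B_n\ge 1$ for all $n$ (positivity of $\tilde F_n$ may come entirely from $Y_n$), so the assertion ``on $S$ one has $F=\tilde F$ identically, so every site is activated'' does not follow from the coupling. To obtain the activation event you need to show that $\tilde B_n\ge 1$ for all $n$ with positive probability, which is precisely $\neg(\ref{f2})$; the paper obtains it cleanly by first establishing the equivalence (\ref{f2})$\Leftrightarrow$(\ref{f3}) and only then attacking (\ref{f1}).

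Second, the ``four-way classification'' device in your last paragraph is solving a problem that is not there. Once one knows that all sites are activated with positive probability, the paper finishes with a one-line argument: by Remark~\ref{rm}, $\neg(\ref{f3})$ forces $E[\ln_+Y_0]=\infty$, whence $\sum_n Y_n a_-^n=\infty$ almost surely by the cited divergence criterion for i.i.d.\ series (Lukacs), and conditional Borel--Cantelli then shows that the set of frogs whose pre-assigned trajectories hit $0$ is almost surely infinite. This full-probability event is not independent of the activation event and does not need to be: intersecting a probability-one event with the positive-probability event ``all frogs woken'' still leaves positive probability, and on that intersection every such trajectory is realized. No disentanglement of the left-going visits from the right-going cascade is required.
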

\begin{proof}
Let $a_\pm\in(0,1)$ be the probability that a frog which starts at $0$ 
ever hits $\pm1$ before it dies.

\underline{(\ref{f2})$\Rightarrow$(\ref{f1}):} This implication is obvious.

\underline{(\ref{f2})$\Leftrightarrow$(\ref{f3}):} 
By conditioning on the first step we see that $a_+$ satisfies $a_+=pr+p(1-r)a_+^2$ and get $a_+=\vr$.
Assign to each frog an a.s.\ finite trajectory which the frog will follow once it has been woken up.
For any $0\le m\le n$ let $B_{m,n}$ be the number of frogs originally sleeping at $m$ whose trajectories reach the site $n$. Then for all $m\ge 0$, $B_{m,m}=Y_m$ and $(B_{m,m+k})_{k\ge 0}$ is a Galton-Watson branching processes with offspring distribution Bernoulli($a_+$). 
Moreover, the processes $(B_{m,m+k})_{k\ge 0}, m\ge 0,$ are independent. 
Hence, if we denote by $Z_n$, $n\ge 0$, the total number of frogs originating in $\{0,1,\ldots,n\}$ whose trajectories visit $n$ then $(Z_n)_{n\ge 0}$ is a subcritical branching process with immigration. By Theorem \ref{rtd}, $(Z_n)_{n\ge 0}$ is recurrent iff (\ref{f3}) holds. 
On the other hand, $(Z_n)_{n\ge 0}$ is recurrent iff there is a.s.\ an $n\ge 1$ such that $Z_n-Y_n=0$, i.e.\ iff there is a site $n$ which is never visited, which is equivalent to  (\ref{f2}).

\underline{$(\neg(\ref{f2})\wedge \neg(\ref{f3}))\Rightarrow\neg$(\ref{f1}):} Since the frogs jump between nearest neighbors, $\neg(\ref{f2})$ implies that with positive probability all frogs are woken up. Moreover, as shown in Remark \ref{rm}, $\neg(\ref{f3})$ implies $E[\ln_+ Y_0]=\infty$ and hence a.s.\ 
$\sum_{n\ge 0}Y_na_-^n=\infty$, see e.g.\ \cite[Theorem 5.4.1]{Luk75}. Since $a_-^n$ is the probability that a frog starting at $n$ ever reaches $0$, $\neg(\ref{f1})$ follows from the Borel Cantelli lemma.
\end{proof}
\section{Random environment}\label{re}
For the case of genuinely random environments we need the following boundedness assumptions.
 Denote for  $j=1,\ldots,d$ by $\mathcal V^j$ the covariance matrix of the vector $(\xi_{0,1,1}^{i,j})_{i=1,\ldots,d}$ given $\Psi$.
\begin{enumerate}
\item[(BD1)] There exists $\ga_1\in\N$ such that a.s.\ $\|A_1\|\le\ga_1$.
\item[(BD2)] There exists $\ga_2\in\N$ such that a.s.\ $\|\mathcal V^1\|,\ldots,\|\mathcal V^d\|\le\ga_2.$
\end{enumerate}
We shall also use the following joint primitivity assumption, which is stronger than the one in Proposition \ref{irr}.
\begin{itemize}
 \item[(PR)] 
There exist  $\kappa>0$ and $K\in\N$ such that a.s.\ $A_1\ldots A_K\in[\kappa,\infty)^{d\times d}.$
\end{itemize}
We need the following rather mild regularity condition on the distribution of $Y_1$.
\begin{equation}
 \tag{REG} \begin{array}{l}\text{There exists $\beta\in(2/3,1)$ such that ${\displaystyle \lim_{x\to\infty}x^{\beta}  P[\|Y_1\|>e^x]=0}$}\\
\text{or ${\displaystyle \liminf_{x\to\infty}x P[\|Y_1\|>e^x]> \la}$.}
\end{array}
\end{equation}
Note that (REG) holds if $P[\|Y_1\|>e^x]$ varies regularly as $x\to\infty$.
\begin{theorem} {\em \bf (Subcritical case, random environment)} \label{rtr}
Assume {\em (BD1), (PR)}, {\em (REG)} and $\la>0$. Let $y\in(0,\infty)$ be such that $P[\|Y_1\|\le y]>0$. Then the following three statements are equivalent.
\begin{align}
  \tag{XR} &\text{The autoregressive processes $X$ is recurrent.}\\
  \tag{MR} &\text{The max-autoregressive process $M$ is recurrent.}\\
  \tag{RR}&\sum_{n\ge 0}\prod_{m=0}^nP[\|Y_1\|\le ye^{m\la}]=\infty.
 \end{align}
If we assume in addition {\em (BD2)} then {\em (XR), (MR),} and {\em (RR)} are equivalent to the following statement.
\begin{align}
  \tag{ZR} \text{The branching process with immigration $Z$ is recurrent.}
 \end{align}
\end{theorem}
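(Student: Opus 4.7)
The plan is to follow the same proof diagram as in Theorem \ref{rtd}, replacing the Perron--Frobenius asymptotics (\ref{limi}) by sharp large-deviation bounds for the matrix products $A_1\cdots A_n$, which are the content of the appendix. The implications (XR)$\Rightarrow$(MR)$\Rightarrow$(NR) are immediate from (\ref{note3}). For (NR)$\Rightarrow$(XR) I would again apply Lemma \ref{eth} to $(U,V)=(N,X)$: because $\la>0$, the sum $\sum_{m\ge 0}\|A_1\cdots A_m\|$ is a.s.\ finite, and the independence of $Y_m$ from $(A_{m+1},\ldots,A_n)$ lets one bound $E[\|X_n\|;\|N_n\|\le b]$ by a constant multiple of $P[\|N_n\|\le b]$ much as in the constant-environment case, by conditioning on $\F=\si(A_n:n\ge 1)$ and separating the factors in the resulting product. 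The remaining easy implication, (XR)$\Rightarrow$(ZR) under (BD2), is handled exactly as before via (\ref{hust}) and Lemma \ref{eth}.

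The equivalence (NR)$\Leftrightarrow$(RR) is where the large-deviation bounds and the regularity condition (REG) come into play. Conditioning on $\F$ and using that $(Y_m)_{m\ge 0}$ is i.i.d.\ and independent of $\F$, one factorizes
\[P[\|N_n\|\le b\mid\F]=\prod_{m=0}^nP[\|A_n\cdots A_{m+1}Y_m\|\le b\mid\F]\]
and then integrates. Condition (PR) combined with (\ref{lala}) ensures that with probability tending to $1$ every entry of $A_n\cdots A_{m+1}$ is of order $e^{-(n-m)\la}$, up to $o(n-m)$ corrections in the exponent, uniformly in $m$; condition (REG) is precisely tailored so that replacing $\|A_n\cdots A_{m+1}\|$ by $e^{-(n-m)\la}$ inside the events $\{\|A_n\cdots A_{m+1}Y_m\|\le b\}$ alters the individual factors only by an error whose cumulative effect over $m$ remains bounded. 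A reindexing argument of the (RC')$\Leftrightarrow$(RC) type from the proof of Theorem \ref{rtd} then identifies the resulting product with the series appearing in (RR).

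The main obstacle, as in the constant-environment case, is the implication (ZR)$\Rightarrow$(RR). I plan to mimic the computation (\ref{da})--(\ref{umb}), but the ratio $\vr^{-k}(1-q_{j,k})$ which was previously controlled by \cite{JS67} must be replaced by a random-environment analogue: a lower bound of the form $1-q_k(\Psi)\ge c(\Psi)e^{-k\la}$ for typical environments $\Psi$, where $q_k(\Psi)$ is the quenched extinction probability after $k$ generations. This is exactly where the variance assumption (BD2) is indispensable, playing the role of a Kesten--Stigum-type second-moment condition on the quenched offspring distribution. Once such a bound is in hand, the computation from (\ref{gsf}) through the telescopic estimate (\ref{col})--(\ref{umb}) can be reproduced with an outer expectation over $\Psi$, the additional random fluctuations of $S_n$ around $n\la$ being absorbed by (REG). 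The crux of the whole theorem is therefore to establish this quenched survival-probability lower bound together with enough control on its typicality, and this is presumably precisely what the appendix bounds deliver.
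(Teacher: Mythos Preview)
Your plan to reuse the diagram of Theorem \ref{rtd} verbatim does not survive the passage to random environment, and the paper does \emph{not} follow it. The implication you label ``(NR)$\Rightarrow$(XR)'' is where the constant-environment argument genuinely breaks. In the proof of Theorem \ref{rtd} the bound
\[
E[\|X_n\|\mid \|N_n\|\le b]\le 2b\sum_{m\ge 0}\|A^m\|<\infty
\]
is a \emph{deterministic} constant, which is exactly what Lemma \ref{eth} requires. After conditioning on $\F=\si(A_n:n\ge 1)$ in the random case, the analogous bound is $c(\F)\le b'\sup_{t\ge 0}\sum_{j\ge 0}\|A_{t+1}\cdots A_{t+j}\|$, a random quantity that is not uniformly bounded and is correlated with $P[\|N_n\|\le b\mid\F]$. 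Consequently you cannot conclude $E[\|X_n\|;\|N_n\|\le b]\le c\,P[\|N_n\|\le b]$ for a deterministic $c$, and Lemma \ref{eth} does not apply. The phrase ``much as in the constant-environment case'' hides a real obstruction.

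The paper's remedy is to abandon the direct route (NR)$\Rightarrow$(XR) altogether and insert two auxiliary conditions (R$+$) and (R$-$), which are versions of (RR) with the exponent $m\la$ replaced by $\la(m\pm m^\al)$ for some $\al\in(1-\beta/2,\beta)$. The cushion $\pm m^\al$ is sized precisely so that (i) the concentration inequality of Lemma \ref{mcd} controls $P[|S_n-\la n|\ge \la n^\al]$, and (ii) the gap between (R$+$) and (R$-$) is bridgeable via the moment bound $E[(\ln_+\|Y_1\|)^\al]<\infty$, which is extracted from (REG) through Lemma \ref{quo}. The cycle closes as (NR)$\Rightarrow$(R$+$)$\Rightarrow$(R$-$)$\Rightarrow$(XR), and the final step (R$-$)$\Rightarrow$(XR) lower-bounds $P[\|X'_n\|<\mu]$ directly, restricting to a positive-probability event on which the random products are well-behaved, rather than going through Lemma \ref{eth}. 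Your sketch of (NR)$\Leftrightarrow$(RR) and (ZR)$\Rightarrow$(RR) anticipates the right ingredients (concentration, quenched survival bounds from Proposition \ref{ex}, the role of (REG)), but the mechanism by which the $o(n)$ fluctuations of $S_n$ are absorbed is exactly this (R$+$)/(R$-$) bracketing, not a direct replacement of $\|A_n\cdots A_{m+1}\|$ by $e^{-(n-m)\la}$ inside a product.
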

For the proof of Theorem \ref{rtr} we denote the cumulative distribution function of $\ln\|Y_1\|$ by $F$ and its tail by $\bar F:=1-F$.
\begin{lemma}\label{quo}
Assume {\em (REG)}, let $\beta\in(2/3,1)$ be accordingly and $\al\in(0,\beta)$.
 Suppose that for all $\eps>0$ there exists $b_\eps\in(0,\infty)$ such that 
$\sum_{n\ge 0}\prod_{i=0}^nF(b_\eps+(\la +\eps)i)=\infty.$
  Then   
$\lim_{x\to\infty}x^{\beta}\bar F(x)=0$
and therefore
$E[(\ln_+\|Y_1\|)^{\al}]<\infty.$ 
\end{lemma}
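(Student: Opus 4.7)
The plan is to prove the tail bound $\lim_{x\to\infty} x^{\beta}\bar{F}(x)=0$ first, since the moment bound follows from it by a routine integration. Note that assumption (REG) is a disjunction: either the desired tail bound already holds (with the prescribed $\beta$), or one has only the weaker hypothesis $\liminf_{x\to\infty} x\bar{F}(x) > \lambda$. The strategy is to rule out the second alternative using the summability hypothesis, so that the first alternative of (REG) must be in force.

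Suppose for contradiction that only the second alternative of (REG) is available, so $\liminf_{x\to\infty} x\bar{F}(x) \ge \lambda + 2\eta$ for some $\eta > 0$. I would then specialize the summability hypothesis to $\epsilon = \eta$ and set $c := \lambda + \eta$, so that we get access to a $b = b_\eta$ with $\sum_{n\ge 0}\prod_{i=0}^n F(b + ci) = \infty$. For $i$ large enough that $b + ci \ge x_0$ (where $x_0$ is chosen so that $\bar{F}(x) \ge (\lambda+2\eta)/x$ for $x \ge x_0$), use $\log(1-y)\le -y$ to estimate
\[
\log F(b+ci) \;\le\; -\bar{F}(b+ci) \;\le\; -\frac{\lambda+2\eta}{b+ci}.
\]
Summing and comparing with the integral $\int \frac{dt}{b/c+t} = \ln(b/c + t)$, the product telescopes to
\[
\prod_{i=0}^{n} F(b+ci) \;\le\; C\,(b/c+n+1)^{-r}, \qquad r := \frac{\lambda+2\eta}{\lambda+\eta} > 1,
\]
which is summable in $n$. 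This contradicts the hypothesis, so the first alternative of (REG) must hold: $\lim_{x\to\infty}x^{\beta}\bar{F}(x)=0$.

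For the moment bound, once $\bar{F}(x) \le C x^{-\beta}$ for $x$ large, write
\[
E[(\ln_+\|Y_1\|)^{\alpha}] \;=\; \alpha\int_0^\infty t^{\alpha-1}\bar{F}(t)\,dt,
\]
which is finite because $\bar{F}\le 1$ near zero and $t^{\alpha-1}\bar{F}(t)\le C t^{\alpha-1-\beta}$ is integrable at infinity precisely when $\alpha<\beta$. The main obstacle is the sharp accounting in the second paragraph: the crude bound one obtains from $\liminf x\bar{F}(x) \ge \lambda$ alone yields exponents exactly at the borderline $r=1$, so one must exploit the strict inequality (the extra $\eta$) to push the exponent safely above $1$ and obtain summability. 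The choice $\epsilon = \eta$ in the hypothesis is what creates this margin and is the key technical point.
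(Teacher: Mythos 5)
Your proof is correct and takes essentially the same route as the paper's: both rule out the second alternative of (REG) by showing that the divergence hypothesis is incompatible with $\liminf_{x\to\infty} x\bar F(x)>\la$, so the first alternative of (REG) with its fixed $\beta$ must be what holds, and the moment bound then follows by the standard layer-cake integration against $\bar F(x)\le Cx^{-\beta}$. The only real difference is cosmetic: the paper cites Raabe's test to deduce $\liminf_x x\bar F(x)\le\la$ directly (choosing $\mu>1$ arbitrary and letting $\eps\searrow 0$), while you establish the contrapositive by hand via the estimate $\log F(b+ci)\le-\bar F(b+ci)$ and the integral comparison $\sum_i (b+ci)^{-1}\sim c^{-1}\ln n$, which yields the polynomial bound $\prod_{i\le n}F(b+ci)\le C n^{-r}$ with $r=(\la+2\eta)/(\la+\eta)>1$ and hence a convergent series. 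That calculation is essentially a self-contained proof of the relevant direction of Raabe's test, so the underlying idea and the crucial exploitation of the strict inequality in (REG) are identical.
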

\begin{proof}
  Raabe's test implies that for all $\mu>1$ and $\eps>0$, $F(b+(\la+\eps)i)\ge 1-\mu/i$ for infinitely many $i$. Therefore $\liminf_x x\bar F(x)\le \la+\eps$. Letting $\eps\searrow 0$ yields $\liminf_x x\bar F(x)\le \la$.
The statement now follows from (REG). 
\end{proof}
\begin{proof}[Proof of Theorem \ref{rtr}]
By Proposition \ref{irr} and (PR), $X,Z$, and $M$ are irreducible. As in the proof of Theorem \ref{rtd} we  assume that $Y_0$ has the same distribution as $Y_n,\ n\ge 1$.

Denote by $\mathcal A\subseteq [0,\infty)^{d\times d}$ the support of $A_1$.
Due to (BD1) and (PR) the assumptions of Lemma \ref{dd} from the appendix are satisfied.

Recall (\ref{note3}) and the auxiliary condition (NR) from the proof of Theorem \ref{rtd}.
Fix $\beta\in(2/3,1)$  according to (REG) and $\al\in(1-\beta/2,\beta)$ and consider the following three additional auxiliary statements. 
\begin{itemize}
 \item[(RR')] There exists $b\in(0,\infty)$ such that $\sum_{n\ge 0}\prod_{m=0}^{n} P\left[\|Y_1\|\le be^{m\la}\right]=\infty.$
 
 \item[(R$+$)] $E[(\ln_+\|Y_1\|)^{\al}]<\infty$ and there exists $b\in(0,\infty)$ such that \\
  $\sum_{n\ge 0}\prod_{i=0}^nF(b+\la(i+i^\al))=\infty.$
\item[(R$-$)] There exists $b\in(0,\infty)$ such that\ $\sum_{n\ge 0}\prod_{i=0}^nF(b+\la(i-i^\al))=\infty.$
\end{itemize}
We shall prove the equivalence of the conditions (XR), (MR), (ZR), (NR), (RR), (RR'), (R$+$), and (R$-$) as indicated in the following diagram.
\[\begin{pspicture}(0,0.7)(6,3.3)
\rput(0,2){(XR)}
\rput(1,2){$\Longrightarrow$}
\rput(2,2){(ZR)}
\rput(2,1){(R$-$)}
\rput{330}(1,1.5){$\Longleftarrow$}
\rput{30}(3,1.5){$\Longleftarrow$}
\rput(3,2){$\Longrightarrow$}
\rput(3,1){$\Longrightarrow$}
\rput(4,2){(R$+$)}
\rput(4,1.5){$\Uparrow$}
\rput(4,1){(RR')}
\rput(5,1){$\Longleftrightarrow$}
\rput(6,1){(RR)}
\rput(1,3){(MR)}
\rput(3,3){(NR)}
\rput{45}(.6,2.5){$\Rightarrow$}
\rput{315}(3.4,2.5){$\Rightarrow$}
\rput(2,3){$\Longrightarrow$}
\end{pspicture}
\]
The proofs of (XR)$\Rightarrow$(MR)$\Rightarrow$(NR), of (XR)$\Rightarrow$(ZR) and of (RR)$\Leftrightarrow$(RR') are the same as for the corresponding statements of Theorem \ref{rtd}.

\underline{(NR)$\Rightarrow$(R$+$):}
Set
$N_n':=\max_{i=0}^nA_1\ldots A_{i}Y_{i+1}$ for all $n\ge 0$ and note that $N'_n$ has the same distribution as $N_n$ since 
 $(A_n,\ldots,A_1,Y_n,\ldots,Y_0)$ has the same distribution as $(A_1,\ldots,A_n,Y_1,\ldots,Y_{n+1})$. Therefore, by (NR) there is a $c\in(0,\infty)$ such that
\begin{eqnarray*}
\infty&=&\sum_{n\ge 0} P\left[ \|N_n'\|\le c\right]\
=\ \sum_{n\ge 0} E\left[P\left[\forall i=0,\ldots,n: \|A_1\ldots A_{i}Y_{i+1}\|\le c\mid (A_k)_{k\ge 1}\right]\right].
\end{eqnarray*}
Since $(A_k,Y_k)_{k\ge 1}$ is independent, $(Y_k)_{k\ge 1}$ is independent given $(A_k)_{k\ge 1}$. Therefore,
\begin{eqnarray*}
\infty&=&\sum_{n\ge 0} E\left[\prod_{i=0}^n P\left[\|A_1\ldots A_{i}Y_{i+1}\|\le c\ \bigg|\ (A_k)_{k\ge 1}\right]\right]
\\
&\overset{(\ref{wiede})}\le &\sum_{n\ge 0} E\left[\prod_{i=0}^n P\left[\|A_1\ldots A_{i}\| \|Y_{i+1}\|\le \con{qwe}\ \bigg|\ (A_k)_{k\ge 1}\right]\right]\
 =\ R(\con{rtz}),
\end{eqnarray*}
where $R(t):=\sum_{n\ge 0}E\left[\prod_{i=0}^nF\left(t+S_i\right)\right].$
For any  function $g:\N_0\to[0,\infty)$ with $g(0)=0$ let 
$T_g:=\inf\{n\ge 0 \mid \forall i> n: S_i\le \la i+g(i)\}$. Then for all $i\ge 1$,
\begin{equation}
P[T_g=i]\le
P\left[S_i\ge \la i+g(i)\right]
\le\ \con{coo}\exp\left(-\con{co}g(i)^2/i\right)\label{deep}
\end{equation}
due to Lemma \ref{mcd}.
Moreover, 
for all $a\in(0,\infty)$ with $F(a)>0$,
\begin{eqnarray}\nonumber
q(a,g)&:=&\sup_{n\ge 0}E\left[\prod_{i=0}^n\frac{F(a+S_i)}{F(a+\la i + g(i))}\right]\le
E\left[\prod_{i\ge 0}\left(\frac{F(a+S_i)}{F(a+\la i+ g(i))}\vee 1\right)\right]\\
&=& E\left[\prod_{i= 1}^{T_g}\left(\frac{F(a+S_i)}{F(a+\la i + g(i))}\vee 1\right)\right]
\ \le\   \label{st}
E\left[\prod_{i=1}^{T_g}\frac{1}{F(a+\la i)}\right].
\end{eqnarray}
 For $\eps>0$ and $i\in\N_0$ let $g_\eps(i):=\eps i$. Then (\ref{st}) yields $q(a,g_\eps)\le E\left[F(a)^{-T_{g_\eps}}\right]$.
Consequently, since $T_{g_\eps}$ 
has some finite exponential moment due to (\ref{deep}), there is for all $\eps>0$ some $b_\eps\ge c_{\ref{rtz}}$ such that $q(b_\eps, g_\eps)<\infty$. 
Since  $R(b_\eps)\ge R(c_{\ref{rtz}})=\infty$, the assumptions of 
Lemma \ref{quo} are satisfied. This lemma yields the first statement in (R$+$) and the existence of $a\ge c_{\ref{rtz}}$ such that 
\begin{equation}\label{lal}
  \bar F( a+\la i)\le (\la i)^{-\beta}\wedge 1/2\quad \mbox{ for all $i\ge 1$.}
\end{equation}
Consider now $g(i):=\la i^\al$.
We obtain from (\ref{st}) that
\begin{eqnarray}\nonumber
q(a,g)&\le & E\left[\prod_{i=1}^{T_g}\left(1+\frac{\bar F(a+\la i)}{F(a+\la i)}\right)\right]\ \le\ E\left[\exp\left(\sum_{i=1}^{T_g}\frac{\bar F( a+\la i)}{F( a+\la i)}\right)\right]\\ \label{kuh}
 &\overset{(\ref{lal})}\le&  E\left[\exp\left(2\sum_{i=1}^{T_g}\bar F( a+\la i)\right)\right]\ \overset{(\ref{lal})}\le\   
E\left[\exp\left(2\la^{-\beta}\sum_{i= 1}^{T_g} i^{-\beta}\right)\right]\\ \label{cow}
&\le &
E\left[\exp\left(2\la^{-\beta}{T_g}^{1-\beta}\right)\right]
\ \overset{(\ref{deep})}\le \  1+c_{\ref{coo}}\sum_{i\ge 1}\exp\left(2\la^{-\beta}i^{1-\beta}-c_{\ref{co}}i^{2\al-1}\right),
\end{eqnarray}
which is finite, since $1-\beta<2\al-1$. Since $R(a)\ge R(c_{\ref{rtz}})=\infty$ this implies (R$+$). 

\underline{(ZR)$\Rightarrow$(R$+$):} 
Set $P_\Psi[\cdot]:=P[\, \cdot\mid\Psi]$.
Denote by $q_{\Psi,j,m,n}$ the probability that in the environment $\Psi$ a given individual of type $j$ who immigrated at time $m$ does not have any descendants at time $n$, i.e.\ with a slight abuse of notation, $q_{\Psi,j,m,n}=P_\Psi[B_{m,n}=0\mid \lfloor Y_m\rfloor =e_j]$.
Proposition \ref{ex} and (BD2) yield that for all $j=1,\ldots,d,$ a.s.\
\begin{equation}\label{quak}
\left(1-d\|A_n\ldots A_{m+1}\|\right)_+\le
q_{\Psi,j,m,n}\le 1-\con{dee}\frac{\|A_n\ldots A_{m+1}\|}{\sum_{k=m+1}^n\|A_n\ldots A_k\|}=:q_{\Psi,m,n}.
\end{equation}
Since $\la>0$, there is an integer  $\ell$ large enough such that $P[d\|A_\ell\ldots A_1\|<1]>0$.
Set $\wZ_n:=\sum_{m=0}^{n-\ell}B_{m,n}$ for $n\ge \ell$.
Since  there is a $b\in(0,\infty)$ such that $\sum_nP\|Z_n\|\le b]=\infty$, there is a $z\in\N_0^d$ such that $\sum_nP[Z_n=z]=\infty$.
Hence
\begin{equation}\label{cez}
P[\wZ_n=0]\ge P[\wZ_n=0, Z_{n-\ell}=z]
=P[Z_{n-\ell}=z] E\left[\prod_{j=1}^dq_{\Psi,j,0,\ell}^{[z]_j}\right]
\end{equation}
since $\Psi$ is i.i.d.. Due to the lower bound in (\ref{quak}) and our choice of $\ell$ the expected value in (\ref{cez}) is strictly positive.
This and our choice of $z$ imply 
\begin{equation}\label{neu}
\sum_{n\ge \ell}P[\wZ_n=0]=\infty.
\end{equation}
On the other hand, for all $n\ge \ell$,
\begin{equation}
 P[\wZ_n=0]=E\left[P_\Psi\left[\bigcap_{m=0}^{n-\ell}\{B_{m,n}=0\}\right]\right]
 =E\left[\prod_{m=0}^{n-\ell}P_\Psi[B_{m,n}=0]\right].\label{bvb}
\end{equation}
Denote the cumulative distribution function of $\ln\|\lfloor Y_1\rfloor\|$ by $L$. 
By independence,
 \begin{eqnarray}\nonumber
  P_\Psi[B_{m,n}=0]&=&E_\Psi\left[P[B_{m,n}=0\mid\Psi,Y]\right]=E_\Psi\left[\prod_{j=1}^dp_{\Psi,j,m,n}^{\lfloor [Y_m]_j\rfloor}\right]\overset{(\ref{quak})}\le 
  E_\Psi\left[q_{\Psi,m,n}^{\|\lfloor Y_m\rfloor\|}\right]\\
  &=&\int_0^1P_\Psi \left[q_{\Psi,m,n}^{\|\lfloor Y_m\rfloor\|}\ge t\right]\ dt=\int_0^1 
  L\left(\ln\left(\frac{\ln t}{\ln q_{\Psi,m,n}}\right)\right)\ dt.\label{hsv}
 \end{eqnarray}
 Now let  $g:\N_0\to[0,\infty)$ be  such that $g(0)=0$.
 Then by (\ref{bvb}) and (\ref{hsv})  for all $n\ge \ell$  and all $a\in(0,\infty)$ such that $L(a)\ge 1/2$,
\begin{equation}\label{uni}
\frac{P[\wZ_n=0]}{\prod_{m=\ell}^nL(a+\la m+g(m))}
= E\left[\prod_{i=0}^{n-\ell}\int_0^1\frac{L\left(\ln\left(\frac{\ln t}{\ln q_{\Psi,i,n}}\right)\right)}{L(a+\la(n-i)+g(n-i))}\ dt\right].
\end{equation}
 Since $(A_1,\ldots,A_n)$ has the same  distribution as $(A_n,\ldots,A_1)$,
 $(q_{\Psi,0,n},\ldots,q_{\Psi,n-1,n})$ has the same  distribution as $(q'_{\Psi,0,n},\ldots,q'_{\Psi,n-1,n})$,
 where 
\begin{equation}\label{lon}q'_{\Psi,i,n}:=1-c_{\ref{dee}}\frac{\|A_1\ldots A_{n-i}\|}{\sum_{k=i+1}^n\|A_1\ldots A_{n+1-k}\|}
\le 1-c_{\ref{dee}}\frac{\|A_1\ldots A_{n-i}\|}{\si} 
\end{equation}
and $\si:=\sum_{k\ge 1}\|A_1\ldots A_{k}\|$.  Let $r_{\Psi,i}:=
\exp\left(-c_{\ref{dee}}\|A_1\ldots A_{i}\|/\si\right)$, $f_{\Psi,i}(t):=\ln\left(\frac{\ln t}{\ln r_{\Psi,i}}\right)$, and $t_{\Psi,i}:= r_{\Psi,i}^{\exp(a+\la i + g(i))}$.
By (\ref{lon}), $q'_{\Psi,i,n} \le r_{\Psi,n-i}.$
Therefore, the right hand side of (\ref{uni}) can be estimated from above by
\begin{eqnarray}\nonumber\lefteqn{
E\left[\prod_{i=0}^{n-\ell}\int_0^1\frac{L\left(f_{\Psi,i}(t)\right)}{L(a+\la i+g(i))}\ dt\right]}\\ \nonumber
&=&E\left[\prod_{i=0}^{n-\ell}\left(1+\int_0^1\frac{\bar L(a+\la i+g(i))-\bar L\left(f_{\Psi,i}(t)\right)}{L(a+\la i+g(i))}\ dt\right)\right]\\
&\le& \nonumber
E\left[\exp\left(2\sum_{i\ge 0}\int_0^1\left(\bar L(a+\la i+g(i))-\bar L\left(f_{\Psi,i}(t)\right)\right)_+\ dt\right)\right]\\
&=&\nonumber
  E\left[\exp\left(2\sum_{i\ge 0}\int_0^{t_{\Psi,i}}\left(\bar L(a+\la i+g(i))-\bar L\left(f_{\Psi,i}(t)\right)\right)_+\ dt\right)\right]\\
&\le& \label{rain}
 E\left[\exp\left(2\sum_{i\ge 0}t_{\Psi,i}\bar L(a+\la i)\right)\right].
 \end{eqnarray}
Let 
$ T_g:=\inf\{m\ge 0\mid \forall i>m:  S_i\le \la i+g(i)/2-\ln\si\}.
$
Then $t_{\Psi,i}\le \exp\left(-c_{\ref{dee}}e^{a+g(i)/2}\right)$ for all $i> T_g$.
Hence the quantity in (\ref{rain})  is less than or equal to
\begin{equation}
  \label{dog}
 \exp\left(2+2\sum_{i\ge 0}\exp\left(-c_{\ref{dee}}e^{a+g(i)/2}\right)\right)E\left[\exp\left(2\sum_{i= 1}^{T_g}\bar L(a+\la i)\right)\right].
\end{equation}
In summary, it follows from (\ref{neu}) and (\ref{uni})--(\ref{dog}) that if the expression in (\ref{dog}) is finite then $\sum_{n\ge 0}\prod_{i=0}^n L(a+\la i+g(i))=\infty$.
We shall use this fact for two functions $g$ of the form 
$g(i)=\zeta i^\eta$ with  $\zeta>0$ and $0<\eta\le 1$. 
For any such $g$ we have due to Lemma \ref{W} for all $i\ge 0$,
\begin{equation}
P[T_g=i]\le
\con{et}\exp\left(-\con{et2}i^{2\eta-1}\right).
\label{deep2}
\end{equation}
First, fix $\eps>0$ and let $g(i):=\eps i$ for $i\in\N_0$. In this case the term in (\ref{dog}) can be bounded from above by 
 $\con{cat}E\left[\exp\left(2\bar L(a)T_{g}\right)\right]$, which is finite for large enough $a$  since $T_{g}$ 
has some finite exponential moment due to (\ref{deep2}).
Therefore, the assumptions of Lemma \ref{quo} are satisfied with $\lfloor Y_1\rfloor$ instead of $Y_1$.
Consequently,  $a$ can be chosen large enough such that (\ref{lal}) holds with $\bar L$ instead of $\bar F$.
Consider now $g(i):=\la i^\al$. Then the expression in (\ref{dog}) is finite
due to the same computation as in (\ref{kuh}) and (\ref{cow}), where we use (\ref{deep2}) instead of (\ref{deep}).
This proves (R$+$) with $L$ instead of $F$.  Since 
 $L(x)\le F(x+1)$
for all $x\ge 0$, (R$+$) follows. 

\underline{(R$+$)$\Rightarrow$(R$-$):}
Define $g_\pm(t):=b+\la(t\pm t^{\al})$ for $t\in[1,\infty)$.
Note that both functions $g_+$ and $g_-$ are strictly increasing.
It suffices to show that the following quantities are finite.
\begin{eqnarray*}
\sup_{n\ge 1}\prod_{i=1}^n\frac{F(g_+(i))}{F(g_-(i))}=\prod_{i\ge 1}\left(1+\frac{F(g_+(i))-F(g_-(i))}{F(g_-(i))}\right)\le 
\prod_{i\ge 1}\left(1+\frac{F(g_+(i))-F(g_-(i))}{F(b)}\right).
\end{eqnarray*}
Therefore, it is enough to show that
$F(g_+(i))-F(g_-(i))$ is summable in $i$. 
Let $m\in\N$ be large enough such that
$t\ge 2t^{\al}$ for all $t\ge m$ and set $\eta:=\ln\| Y_1\|$. Then 
\begin{eqnarray}\nonumber
\sum_{i\ge m}F(g_+(i))-F(g_-(i))&=&\sum_{i\ge m}E[\won_{(g_-(i), g_+(i)]}(\eta)]\ =\ E\left[\sum_{i\ge m}\won_{[g_+^{-1}(\eta),g_-^{-1}(\eta))}(i)\right]\\ \label{bel}
&\le & E[g_-^{-1}(\eta)-g_+^{-1}(\eta); g_-^{-1}(\eta)\ge m]+1.
\end{eqnarray}
However, on the event $\{g_-^{-1}(\eta)\ge  m\}$, by definition of $g_\pm$ and our choice of $m$,
\begin{eqnarray*}
 g_-^{-1}(\eta)-g_+^{-1}(\eta)&=&
\left(g_-^{-1}(\eta)\right)^{\al}+\left(g_+^{-1}(\eta)\right)^{\al}\le 2\left(g_-^{-1}(\eta)\right)^{\al}\\
&\le& 2\left(2g_-^{-1}(\eta)-2(g_-^{-1}(\eta))^{\al}\right)^{\al}=2\left(\frac{2(\eta-b)}\la\right)^{\al}.
\end{eqnarray*}
Therefore, the expression in (\ref{bel}) is finite due to $E[\eta^\al]<\infty$.

\underline{(R$-$)$\Rightarrow$(RR'):} This implication follows from monotonicity.

\underline{(RR')$\Rightarrow$(R$+$):} The first part of (R$+$) follows from monotonicity and Lemma \ref{quo}. The second part follows from monotonicity.

\underline{(R$-$)$\Rightarrow$(XR):}
Let $b$ be according to (R$-$).
Due to (\ref{lala}) and $\la>0$, we have $(S_i-\la i^\al/2)\to\infty$ a.s.\ as $i\to\infty$.
Therefore, there is $b'\ge b$ such that 
 $P[D]>0,$ where
 $D:=\{F(b'+S_i-\la i^\al/2)>1/2\text{ for all $i\ge 0$}\}.$
For $i\ge 0$ set $\mu_i:=\exp\left(b'-\la i^\al/2\right)$. Then $\mu:=\sum_{i\ge 0}\mu_i<\infty$.
Recall (\ref{note}) and set for all $n\ge 0$,
$X_n':=\sum_{i=0}^nA_1\ldots A_{i}Y_{i+1}.$
Then  for each $n$,
 $X_n$ has the same distribution as $X'_n$.
Therefore, it suffices  to show that
$\sum_{n\ge 0}P[\|X_n'\|<\mu]=\infty$. Hence we estimate
\begin{eqnarray*}\lefteqn{
P[\|X_n'\|<\mu] \ge P\left[\sum_{i=0}^n \|A_1\ldots A_{i}\|\ \|Y_{i+1}\|<\mu\right]}\\
&\ge&
E\left[P\left[\bigcap_{i=0}^n \{\ln\|Y_{i+1}\|<\ln \mu_i+S_i\}\Bigg| (A_k)_{k\ge 1}\right]\right]\
=\ E\left[\prod_{i=0}^nF(\ln m_i+S_i)\right].
\end{eqnarray*}
Set $T:=\inf\{n\ge 0\mid \forall i> n: S_i\ge \la (i-i^\al/2)\}.$ Then
\begin{eqnarray*}\lefteqn{
\inf_{n\ge 0}E\left[\prod_{i=0}^n\frac{F(\ln m_i+S_i)}{F(b'+\la(i-i^\al))}\right]
\ge
E\left[\prod_{i\ge 1}\left(\frac{F(\ln m_i+S_i)}{F(b'+\la(i-i^\al))}\wedge 1\right); D
\right]}\\
&=&
E\left[\prod_{i=1}^{T}\left(\frac{F(b'+S_i-\la i^\al/2)}{F(b'+\la(i-i^\al))}\wedge 1\right);D
\right]\ge E[2^{-T};D]>0
\end{eqnarray*}
since  $P[D]>0$ and $T<\infty$  a.s.\  due to Lemma \ref{mcd}.  This implies (XR). 
\end{proof}
By exponentiating $R$ we obtain from Theorem \ref{rtr} the following generalization of Proposition \ref{pro}.
\begin{cor}\label{grep}{\bf (General random exchange process)} 
Assume that  $T_1$ is bounded and $E[T_1]>0$. 
Moreover, suppose that there exists $\beta\in(2/3,1)$ such that  ${\displaystyle \lim_{x\to\infty}}x^{\beta}  P[W_1>x]=0$
or ${\displaystyle \liminf_{x\to\infty}x P[W_1>x]> E[T_1]}$. Let $y\in(0,\infty)$ be such that $P[W_1\le y]>0$. Then $R$ 
is recurrent iff 
 \[\sum_{n\ge 0}\prod_{m=0}^nP\left[W_1\le y+mE[T_1]\right]=\infty.\]
\end{cor}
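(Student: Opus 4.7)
The plan is to reduce the corollary to the $d = 1$ case of Theorem \ref{rtr} by exponentiating. Define $M_n := e^{R_n}$ and $Y_n := e^{W_n}$ for $n \ge 0$, and $A_n := e^{-T_n}$ for $n \ge 1$. Since $\exp$ is strictly increasing and continuous, the recursion (\ref{gep}) transforms into $M_0 = Y_0$ and $M_n = \max\{A_n M_{n-1}, Y_n\}$, which is exactly (\ref{den}) with $d = 1$. The i.i.d.\ and independence assumptions on $(T_n, W_n)_{n \ge 1}$ and $W_0$ carry over to $(A_n, Y_n)_{n \ge 1}$ and $Y_0$, and the support of $Y_1$ inherits unboundedness from that of $W_1$.

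Next I would verify the hypotheses of Theorem \ref{rtr}. Boundedness of $T_1$ makes $A_1 = e^{-T_1}$ bounded above (yielding (BD1)) and bounded below by some constant $\kappa > 0$ (yielding (PR) with $K = 1$). In dimension one, the strong law of large numbers applied to (\ref{lala}) gives $\la = -E[\ln A_1] = E[T_1] > 0$. The identity $P[\|Y_1\| > e^x] = P[W_1 > x]$ shows that the two alternative hypotheses on $W_1$ match exactly the two clauses of (REG) with this value of $\la$. Finally, $y' := e^y$ satisfies $P[\|Y_1\| \le y'] = P[W_1 \le y] > 0$. The equivalence (MR) $\Leftrightarrow$ (RR) of Theorem \ref{rtr} then yields that $M$ is recurrent if and only if
\[\sum_{n \ge 0} \prod_{m=0}^n P[\|Y_1\| \le y' e^{m\la}] = \sum_{n \ge 0} \prod_{m=0}^n P[W_1 \le y + m E[T_1]] = \infty.\]

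It remains to argue that $M$ and $R$ are recurrent together. Both are order-preserving Markov chains in the sense of Section \ref{rat}, and $M$ is irreducible by Proposition \ref{irr} (applicable since $A_1 > 0$ a.s.). Because $W_n \ge 0$ forces $R_n \ge 0$, the map $r \mapsto e^r$ is a strictly increasing bijection from $[0, \infty)$ onto $[1, \infty)$ intertwining $R$ and $M$, and $\{M_n \le b\} = \{R_n \le \ln b\}$ for every $b \ge 1$. Consequently, irreducibility and the defining condition (\ref{forall}) hold for $M$ with some $b$ if and only if they hold for $R$ with $\ln b$, so $M$ and $R$ are recurrent together. The heavy lifting has already been done in Theorem \ref{rtr}; at the level of this corollary there is no genuine obstacle, only the bookkeeping needed to transport the hypotheses and Kellerer's recurrence notion faithfully across the exponential change of variables.
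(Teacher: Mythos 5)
Your proof is correct and follows exactly the approach the paper indicates in the sentence preceding the corollary ("By exponentiating $R$ we obtain from Theorem \ref{rtr}\ldots"): exponentiate to map the random exchange recursion onto the one-dimensional max-autoregressive recursion, verify (BD1), (PR) with $K=1$, $\la=E[T_1]>0$, and (REG), and then transport the recurrence criterion back along the monotone bijection $r\mapsto e^r$ using the fact that Kellerer's notion of recurrence is insensitive to the initial state. The bookkeeping is done carefully and no step is missing.
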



\subsection{An application to random walks in random environments perturbed by cookies of maximal strength}
We consider the same version of excited random walks in random environment as Bauernschubert in \cite{Bau13}.
Let  $\om=(\om_x)_{x\in\Z}$ be an i.i.d.\ family of $(0,1)$-valued random variables and $Y=(Y_x)_{x\in \Z}$ be an i.i.d.\ family of $\N_0$-valued random variables such that $P[Y_0=0]>0$. We call $\om_x$ the environment at $x$ and $Y_x$ the number of cookies at $x$.  The random walk $\xi=(\xi_n)_{n\ge 0}$ in the random environment $\om$  perturbed by the cookies $Y$ is defined as follows. The walk starts at $\xi_0=0$. Upon
 any of the first $Y_x$ many visits to a site $x$ the walker reduces the number of cookies at that site by one and then moves in the next step deterministically to $x+1$. Upon the $(Y_x+1)$-st or any later visit to $x$, i.e.\ when there are no cookies left at $x$, the walker  jumps independently of everything else  with probability $\om_x$ to $x+1$ and with probability $1-\om_x$ to $x-1$. More formally, for all $n\ge 0$ and $z=\pm1$ a.s.
\[P[\xi_{n+1}=\xi_n+z\mid (\xi_{k})_{0\le k\le n},Y,\om]=\left\{\begin{array}{ll}1&\text{if $z=1, \#\{k\le n\mid \xi_k=\xi_n\}\le Y_{\xi_n}$}\\
                     \om_x&\text{if $z=1, \#\{k\le n\mid \xi_k=\xi_n\}> Y_{\xi_n}$}\\
                     1-\om_x&\text{if $z=-1, \#\{k\le n\mid \xi_k=\xi_n\}> Y_{\xi_n}$}.
                                             \end{array}\right.\]
The random walk $\xi$ is called transient to the right if $\xi_n\to\infty$ as $n\to\infty$, transient to the left if $\xi_n\to-\infty$ as $n\to\infty$, and recurrent if $\xi_n=0$ for infinitely many $n$. 
In the case without cookies, i.e.\ where $P[Y_0=0]=1$, we retrieve the classical one-dimensional random walk in random environment (RWRE).  It is  known that that under mild assumptions RWRE is a.s.\ recurrent iff $E[\ln\rho_0]=0$, where $\rho_0:=(1-\om_0)/\om_0$, and a.s.\ transient to the right (resp. left) iff $E[\ln\rho_0]<0$ (resp.\ $>0$), see e.g.\ \cite[Theorem 2.1.2]{Zei04}.

We consider the case  $E[\ln\rho_0]>0$ in which the underlying RWRE is transient to the left and ask how many cookies are needed in order to make this walk recurrent or even transient to the right. Using (\ref{BB1}), (\ref{BB2}),  and a well-known relationship between excursions of random walks and branching processes, Bauernschubert obtained in \cite{Bau13} the following result.
\begin{theo}\label{mon}{ \em  (\cite[Theorem 1.1]{Bau13})} 
Assume that the random variables $\om_x, Y_x\ (x\in\Z)$ are independent and let $E[|\ln\rho_0|]<\infty$, $E[\ln\rho_0]>0,$ and $E[\om_0^{-1}]<\infty$.
\begin{enumerate}
  \item[(a)] If $E[\ln_+Y_1]<\infty$ then $\xi$ is a.s.\ transient to the left.
  \item[(b)] If $E[\ln_+Y_1]=\infty$ and if $\limsup_{t\to\infty}t\cdot P[\ln Y_1>t]<E[\ln\rho_0]$, then 
   $\xi$ is a.s.\ recurrent.
   \item[(c)] If $\limsup_{t\to\infty}t\cdot P[\ln Y_1>t]>E[\ln\rho_0]$
   then $\xi$ is a.s.\ transient to the right.
 \end{enumerate}
\end{theo}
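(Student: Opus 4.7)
The plan is to reduce the analysis of $\xi$ to that of a single-type subcritical branching process in random environment with immigration (BPIRE) via the classical excursion/branching correspondence, and then apply the recurrence criterion from Theorem \ref{rtr}, which in particular entails the tail criteria (\ref{BB1})--(\ref{BB2}). The reduction for excited walks with cookies of maximal strength is the one used by Bauernschubert \cite{Bau13}, building on Kesten-Kozlov-Spitzer for the cookie-free case.

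\textbf{Step 1 (BPIRE embedding).} Let $\tau := \inf\{n \ge 0 : \xi_n = -1\}$. On the event $\{\tau < \infty\}$, for each $k \ge 0$ I define $Z_k$ to be the number of transitions from site $k+1$ to site $k$ performed by $\xi$ before time $\tau$. A direct bookkeeping argument, carried out in \cite{Bau13}, shows that, conditional on the environment $\om$ and on $Y$, the process $(Z_k)_{k \ge 0}$ is a single-type BPIRE with environment $(\om_{k+1})_{k \ge 0}$ and immigration $(Y_{k+1})_{k \ge 0}$: the offspring distribution at generation $k$ is geometric on $\N_0$ with parameter $\om_{k+1}$ and conditional mean $A_{k+1} = \rho_{k+1} = (1-\om_{k+1})/\om_{k+1}$, while the $Y_{k+1}$ cookies at site $k+1$ are translated into $Y_{k+1}$ deterministic immigrants at generation $k$. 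Since $\rho$ is i.i.d.\ with $E[|\ln \rho_0|] < \infty$ and $E[\ln \rho_0] > 0$, the process $Z$ is subcritical in the sense of (\ref{lala}) with $\la = E[\ln \rho_0] > 0$.

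\textbf{Step 2 (Dictionary and criterion).} The embedding yields the standard dictionary: $\xi$ is transient to the left iff $Z$ is positive recurrent; $\xi$ is recurrent iff $Z$ is null recurrent; and $\xi$ is transient to the right iff $Z$ is transient. By the classical moment characterization of positive recurrence recalled around (\ref{pos}) applied to $Z$, the condition $E[\ln_+ Y_1] < \infty$ is equivalent to positive recurrence of $Z$, which proves (a). For (b) and (c), I apply (\ref{BB1})--(\ref{BB2}) (equivalently, Theorem \ref{rtr} with $d = 1$) to $Z$ with $\la = E[\ln \rho_0]$: the hypothesis $\limsup_{t\to\infty} t \cdot P[\ln Y_1 > t] < E[\ln \rho_0]$ yields recurrence of $Z$ and hence of $\xi$, while $\liminf_{t\to\infty} t \cdot P[\ln Y_1 > t] > E[\ln \rho_0]$ yields transience of $Z$, hence of $\xi$; the last transience must be to the right because part (a) already rules out transience to the left in this regime.

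\textbf{Main obstacle.} The delicate part of the argument is Step 1. Although the BPIRE embedding is standard for plain RWRE, the cookies are consumed only on the first $Y_x$ visits to each site $x$, so one must verify carefully that this mechanism is equivalent, at the level of the counting process $(Z_k)_{k \ge 0}$, to injecting $Y_{k+1}$ deterministic immigrants into generation $k$ independently of everything else; doing this cleanly requires tracking excursions and properly accounting for the ``stored'' future right-moves supplied by each cookie. Once Step 1 is in place, the remaining hypotheses $E[|\ln \rho_0|] < \infty$ and $E[\om_0^{-1}] < \infty$ play the role of the boundedness conditions (BD1)--(BD2) of Theorem \ref{rtr}: the first makes $\la$ well-defined and finite, while the second supplies the uniform moment control on the geometric offspring distribution required to legitimize (\ref{BB1})--(\ref{BB2}).
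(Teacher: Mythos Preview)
The paper does not prove Theorem~\ref{mon} at all: it is quoted verbatim from \cite{Bau13}, with only the one-sentence attribution ``Using (\ref{BB1}), (\ref{BB2}), and a well-known relationship between excursions of random walks and branching processes, Bauernschubert obtained \ldots''. Your Steps 1--2 are exactly that sketch, fleshed out, so in that sense your outline agrees with what the paper indicates.

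There is, however, a genuine slip in your final paragraph and in the parenthetical ``(equivalently, Theorem~\ref{rtr} with $d=1$)''. Theorem~\ref{rtr} is \emph{not} available under the hypotheses of Theorem~\ref{mon}. It requires (BD1), i.e.\ $\|A_1\|\le\ga_1$ a.s., which here reads $\rho_0\le\ga_1$ a.s., equivalently $\om_0$ bounded away from $0$. The assumption $E[\om_0^{-1}]<\infty$ does not imply this, nor does $E[|\ln\rho_0|]<\infty$; these moment conditions are what Bauernschubert needs for her own (\ref{BB1})--(\ref{BB2}), not what the present paper needs for Theorem~\ref{rtr}. This is precisely why the paper's improvement, Theorem~\ref{rrr}, \emph{strengthens} the hypotheses to uniform ellipticity $\om_0\in[\eps,1-\eps]$ before invoking Theorem~\ref{rtr}. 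So your claim that $E[|\ln\rho_0|]<\infty$ and $E[\om_0^{-1}]<\infty$ ``play the role of (BD1)--(BD2)'' is incorrect: under the hypotheses of Theorem~\ref{mon} you must appeal to Bauernschubert's (\ref{BB1})--(\ref{BB2}) as black boxes, not to Theorem~\ref{rtr}.

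A smaller point: in part (c) the stated hypothesis is a $\limsup$, but in your Step~2 you silently replace it by a $\liminf$ to match (\ref{BB1}); these are not the same condition, so as written your argument does not cover (c) as stated.
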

Replacing in the proof of this theorem (\ref{BB1}) and (\ref{BB2})  by Theorem \ref{rtr} we obtain the following complete characterization of recurrence/transience of $\xi$ in the so-called uniformly elliptic case where the transition probabilities $\om_x$ are bounded away from 0 and 1.
\begin{theorem}\label{rrr}
 Assume that $(\om_x,Y_x)_{x\in \Z}$ is independent  and that there is an $\eps>0$ such that a.s.\ $\om_0\in[\eps,1-\eps]$ and let $E[\ln\rho_0]>0$. 
 \begin{enumerate}
  \item[(a)] If $E[\ln_+Y_0]<\infty$ then $\xi$ is a.s.\ transient to the left.
  \item[(b)] If $E[\ln_+Y_0]=\infty$ and if  
  \begin{equation}\sum_{n\ge 0}\prod_{m=0}^n P\left[Y_0\le \exp\left(mE[\ln\rho_0]\right)\right]
   \label{exc}
  \end{equation}
is infinite  then $\xi$ is a.s.\ recurrent.
   \item[(c)] If the series in {\em (\ref{exc})} is finite
   then $\xi$ is a.s.\ transient to the right.
 \end{enumerate}
\end{theorem}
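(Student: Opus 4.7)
The plan is to mimic the proof of \cite[Theorem 1.1]{Bau13} step for step, feeding the sharper criterion of Theorem~\ref{rtr} into the place where Bauernschubert uses the one-sided Kellerer--Kesten conditions~(\ref{BB1}) and~(\ref{BB2}). The heart of her argument is an excursion-based coupling that associates with~$\xi$ a subcritical one-dimensional branching process $Z=(Z_n)_{n\ge 0}$ in random environment~$\omega$ with immigration~$Y$; here $Z_n$ counts (essentially) the number of down-crossings from $n+1$ to~$n$ performed by~$\xi$ during an excursion from~$0$. The offspring law at generation~$n$ is geometric with a parameter determined by~$\omega_n$, calibrated so that the conditional offspring mean~$A_n$ satisfies $-E[\ln A_1]=E[\ln\rho_0]=\la>0$, while the immigration at generation~$n$ is the cookie count $Y_n$. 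Her dictionary translates the trichotomy for~$\xi$ into the trichotomy for~$Z$: $\xi$ is transient to the right iff~$Z$ is transient, $\xi$ is recurrent iff~$Z$ is null recurrent, and $\xi$ is transient to the left iff~$Z$ is positive recurrent.

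Next I verify the hypotheses of Theorem~\ref{rtr} for this~$Z$. Uniform ellipticity $\omega_0\in[\eps,1-\eps]$ a.s.\ forces $A_n\in[\eps/(1-\eps),(1-\eps)/\eps]$, giving~(BD1), and bounds the variance of the geometric offspring law by a constant depending only on~$\eps$, giving~(BD2). Condition~(PR) is automatic in dimension~$1$ since $A_n>0$ a.s., with $K=1$ and $\kappa=\eps/(1-\eps)$. The regularity condition~(REG) is not imposed in the statement of Theorem~\ref{rrr} and must therefore be handled separately; the natural approach is a truncation argument that replaces $Y_n$ by $Y_n\wedge K$, for which~(REG) is trivial, and then lets~$K\to\infty$ using a monotone coupling of~$\xi$ in the cookie strengths together with the fact that the series~(\ref{exc}) is monotone in the distribution of~$Y_0$.

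With Theorem~\ref{rtr} available for~$Z$, the three parts follow immediately. Part~(a) is the classical positive-recurrence criterion~(\ref{pos}) for branching processes with immigration: $E[\ln_+Y_0]<\infty$ implies that~$Z$ admits a stationary law, whence by the dictionary~$\xi$ is transient to the left. For parts~(b) and~(c), the equivalence (XR)$\Leftrightarrow$(RR) of Theorem~\ref{rtr} with $\la=E[\ln\rho_0]$ and $y=1$ says that~(\ref{exc}) diverges iff~$Z$ is recurrent; combining this with the dichotomy from part~(a), if~(\ref{exc}) diverges and $E[\ln_+Y_0]=\infty$ then~$Z$ is null recurrent, so~$\xi$ is recurrent, while if~(\ref{exc}) converges then~$Z$ is transient, so~$\xi$ is transient to the right. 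I expect the main obstacle to be the faithful bookkeeping required in the branching-immigration coupling in the presence of cookies: cookies consumed during an early excursion from~$0$ are no longer available in later excursions, so it is not obvious that the immigration stream driving~$Z$ genuinely behaves as an i.i.d.\ sequence distributed as~$Y_0$. This delicate point is however already handled in~\cite{Bau13}, and once it is in place the remainder of the proof is a direct substitution of Theorem~\ref{rtr} for~(\ref{BB1})--(\ref{BB2}) in Bauernschubert's scheme.
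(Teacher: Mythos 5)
Your overall route is the same as the paper's: the paper's proof of Theorem~\ref{rrr} consists of the single instruction to replace (\ref{BB1})--(\ref{BB2}) by Theorem~\ref{rtr} in Bauernschubert's excursion/branching argument from~\cite{Bau13}, and you carry out exactly that substitution, correctly checking (BD1), (BD2), (PR) and $\la=E[\ln\rho_0]>0$ from uniform ellipticity and $E[\ln\rho_0]>0$.

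You are right to worry about (REG); it is a hypothesis of Theorem~\ref{rtr} but is not imposed on $Y_0$ in Theorem~\ref{rrr}, and the paper's one-line proof does not address it. Notice that Corollary~\ref{grep}, obtained from the same Theorem~\ref{rtr}, does explicitly carry the analogous regularity condition on the tail of~$W_1$, which strongly suggests that the condition is genuinely needed and should appear in Theorem~\ref{rrr} as well. However, the truncation you propose does not close the gap. For every~$K$ the walk driven by $Y_n\wedge K$ satisfies $E[\ln_+(Y_0\wedge K)]<\infty$ and hence, by part~(a), is a.s.\ transient to the left; the monotone coupling only places this walk below~$\xi$, which tells you nothing about~$\xi$, and left-transience need not survive the limit $K\to\infty$ in any case. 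More generally, no monotone replacement of the law of $Y_0$ can work here: a stochastically larger surrogate $Y_0'$ forced to satisfy the second clause of (REG), i.e.\ $\liminf_x x\,P[\ln Y_0'>x]>\la$, automatically has a convergent series~(\ref{exc}), which is useless in part~(b) where the series must diverge; for part~(c) one needs a stochastically smaller $Y_0'$ with a convergent series, but thinning the tail only enlarges the series, and the second clause of (REG) can never hold for $Y_0'\preceq Y_0$ once it fails for~$Y_0$. So (REG), or some genuine substitute for it, has to be supplied explicitly before Theorem~\ref{rtr} may be invoked; as written, both your sketch and the paper's terse proof leave this point open.
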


\section*{Appendix. Bounds for the case of random environment}

\begin{lemma}\label{dd} Let $\ga,K\in\N,$ $0<\kappa\le 1$ and  $\mathcal A\subseteq [0,\infty)^{d\times d}$.  
For $n\in\N_0$ set
 $\mathcal G_n:=\{A_1\ldots A_n: A_1,\ldots,A_n\in\mathcal A\}$ and
 $\mathcal G:=\bigcup_{n\ge 0}\mathcal G_n.$
 Assume that $\|A\|\le \ga$ for all $A\in\mathcal A$ and $\mathcal G_K\subseteq [\kappa,\infty)^{d\times d}$.
Then there is a constant $c=c(\ga,K,\kappa, d)$ such that 
  \begin{align}
  \|A\|\|x\|&\le c\|Ax\|&&\text{for all $A\in\mathcal G, x\in[0,\infty)^d$, }\label{wiede}\\
  \|A\|\|B\|&\le c\|AB\|&&\text{for all $A\in\mathcal G, B\in [0,\infty)^{d\times d}$,} \label{mann}\\
  \|A\|&\le c[A]_{1,1}&&\text{for all $n\ge K, A\in\mathcal G_n,$ and}\label{hopf}\\ 
\label{low}
 \kappa^{1/K}&\le \|A\|&&\text{for all $A\in\mathcal A$.} 
 \end{align}
\end{lemma}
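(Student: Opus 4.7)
The plan is to exploit the positivity of $K$-fold products to compare individual entries, row and column sums, and the max-row-sum norm of elements of $\mathcal{G}_n$. I would begin with (\ref{low}), which is immediate: taking $A_1=\dots=A_K=A$ gives $A^K\in\mathcal{G}_K$, so every entry of $A^K$ is at least $\kappa$, hence $\|A^K\|\ge\kappa$; combined with submultiplicativity $\|A^K\|\le\|A\|^K$, this yields $\|A\|\ge\kappa^{1/K}$.

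The heart of the argument is (\ref{hopf}). For $n\ge 2K$ I would decompose any $A\in\mathcal{G}_n$ as $A=C_1 D C_2$ with $C_1,C_2\in\mathcal{G}_K$ and $D\in\mathcal{G}_{n-2K}$, which is possible by associativity from the defining product $A=A_1\cdots A_n$. The entrywise bounds $\kappa\le[C_i]_{a,b}\le\gamma^K$ then sandwich every entry of $A$:
\[\kappa^2 \sum_{k,l}[D]_{k,l}\le [A]_{i,j}\le \gamma^{2K}\sum_{k,l}[D]_{k,l}\qquad\text{for all }i,j,\]
which yields $\|A\|\le d\max_{i,j}[A]_{i,j}\le(d\gamma^{2K}/\kappa^2)[A]_{1,1}$.

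The main obstacle is the intermediate regime $K\le n<2K$, where no two-sided sandwich fits, together with the need to handle $\mathcal{G}_m$ for $m<K$ in (\ref{wiede}) and (\ref{mann}). The key auxiliary fact I would prove here is that every row sum and every column sum of any $B\in\mathcal{G}_m$ with $1\le m\le K$ is at least $\kappa/\gamma^{K-m}$. To see the row sum version, pick any $B'\in\mathcal{G}_{K-m}$; then $BB'\in\mathcal{G}_K$ forces $\sum_k[B]_{i,k}[B']_{k,j}\ge\kappa$ for every $i,j$, and since $[B']_{k,j}\le\|B'\|\le\gamma^{K-m}$, summing over $k$ gives the claim. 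Pre-multiplying by an element of $\mathcal{G}_{K-m}$ yields the column sum bound analogously. With this in hand, for $K\le n<2K$ the one-sided decomposition $A=C_1 D$ with $C_1\in\mathcal{G}_K$ and $D\in\mathcal{G}_{n-K}$ gives $[A]_{i,j}\ge\kappa\cdot\mathrm{colsum}_j(D)\ge\kappa^2/\gamma^{2K-n}$ uniformly in $i,j$, while trivially $\|A\|\le\gamma^n$, completing (\ref{hopf}); the boundary case $n=K$ is immediate.

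The same proof of (\ref{hopf}) applies verbatim with $(1,1)$ replaced by any $(i,j)$, so for $A\in\mathcal{G}_n$ with $n\ge K$ one has $\|A\|\le c[A]_{i,j}$ uniformly. For $A\in\mathcal{G}_n$ with $n<K$, the auxiliary lemma gives $\max_i[A]_{i,j}\ge\mathrm{colsum}_j(A)/d\ge\kappa/(d\gamma^{K-n})$, while $\|A\|\le\gamma^n$, producing a uniform bound $\|A\|\le c'\max_i[A]_{i,j}$ valid for all $A\in\mathcal{G}$ and all $j$. Choosing $j^*$ with $x_{j^*}=\|x\|$ then gives (\ref{wiede}) via $\|Ax\|\ge\max_i[A]_{i,j^*}\cdot x_{j^*}\ge(\|A\|/c')\|x\|$; for (\ref{mann}), expanding $\|AB\|=\max_i\sum_k[A]_{i,k}\mathrm{rowsum}_k(B)\ge(\max_i[A]_{i,k^*})\|B\|$ with $k^*$ the row of $B$ of maximal row sum yields the desired inequality by the same uniform bound.
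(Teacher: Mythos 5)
Your proof is correct, and it takes a genuinely different, more hands-on route than the paper's. The paper introduces two functionals measuring how spread out a nonnegative matrix is---$\Delta_A$, the largest ratio of two entries of $A$ lying in the same row or column, and $\delta_A=\|A\|_1/\mu(A)$ where $\mu(A)=\min_j\max_i[A]_{i,j}$---and establishes submultiplicative-type relations ($\Delta_{AB}\le\max\{\Delta_A,\Delta_B\}$, $\Delta_{AB}\le\Delta_A\delta_B$, $\delta_{AB}\le\delta_A\delta_B$, $\delta_A\le d\Delta_A$), from which it deduces uniform boundedness of $\Delta$ over $\bigcup_{n\ge K}\mathcal G_n$ and of $\delta$ over all of $\mathcal G$, and then reads off the four inequalities of the lemma. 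You instead factor $A$ into blocks and sandwich each entry directly via $\kappa\le[C_i]_{a,b}\le\gamma^K$ for $C_i\in\mathcal G_K$, supplemented by a self-contained auxiliary lower bound on row and column sums of $\mathcal G_m$ for $m\le K$; that auxiliary lemma plays the same role as the paper's bound on $\sup\{\delta_A:A\in\mathcal A\}$ and is proved by essentially the same device, namely multiplying by an element of $\mathcal G_{K-1}$ (resp.\ $\mathcal G_{K-m}$) and using positivity of the resulting element of $\mathcal G_K$. Both arguments rest on the same foundation---uniform strict positivity of $\mathcal G_K$---but yours bypasses the $\delta,\Delta$ functionals entirely, which makes it shorter and more elementary and yields somewhat better constants, whereas the paper's framework is more modular: the estimate $\Delta_{AB}\le\max\{\Delta_A,\Delta_B\}$ is a Birkhoff-contraction type fact with independent interest and would be reusable if one needed to track these ratio quantities explicitly.
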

\begin{proof} 
 For any matrix $A$ let $\mu(A):=\min_j\max_i [A]_{i,j}.$
The following two quantities are used  
 to measure the variation among the entries of $A$.
 \begin{align*}\delta_A&:=\|A\|_1/\mu(A)\in[1,\infty]&&\text{for $A\in [0,\infty)^{d\times d}\backslash\{0\}$ and}\\
   \Delta_A&:=\max\left\{\frac{[A]_{i,j}}{[A]_{i,k}},\frac{[A]_{i,j}}{[A]_{k,j}}: i,j,k\in\{1,\ldots,d\}\right\}\in[1,\infty)
   &&\text{for $A\in (0,\infty)^{d\times d}$.}
  \end{align*} 
 We first show the following relations.
\begin{align}
 \label{par0}
 \Delta_{AB}&\le \max\{\Delta_A, \Delta_B\}&&\text{for all $A, B\in (0,\infty)^{d\times d}$.}\\
 \label{par01}
 \Delta_{AB}&\le \Delta_A \delta_B&&\text{for all $A\in(0,\infty)^{d\times d},B\in [0,\infty)^{d\times d}\backslash\{0\}$.}\\
 \label{park}
 \delta_{AB}&\le \delta_A \delta_B&& \text{for all $A,B\in [0,\infty)^{d\times d}\backslash\{0\}$.}\\
  \delta_A&\le d\Delta_A&&\text{for all $A\in(0,\infty)^{d\times d}$}.\label{par4}
\end{align}
Statement (\ref{par0}) follows from the fact that for all $i,j,k\in\{1,\ldots,d\}$,
\begin{eqnarray}\nonumber
 \frac{[AB]_{i,j}}{[AB]_{i,k}}&=&\frac{\sum_n{[A]_{i,n}[B]_{n,j}}}{\sum_n{[A]_{i,n}[B]_{n,k}}}\le
\frac{\sum_n[A]_{i,n}\Delta_B [B]_{n,k}}{\sum_n{[A]_{i,n}[B]_{n,k}}} =\Delta_B\quad\text{and similarly}\\ 
 \frac{[AB]_{i,j}}{[AB]_{k,j}}&\le& \Delta_A.\label{sim2}
\end{eqnarray}
 To show (\ref{par01}) let $m$ and $k$  be such that $[B]_{m,k}=\max_{n}[B]_{n,k}=\mu(B)$. Then
 \[\frac{[AB]_{i,j}}{[AB]_{i,k}}
 \le\frac{\sum_n\Delta_A[A]_{i,m}[B]_{n,j}}{[A]_{i,m}[B]_{m,k}}\le\Delta_A \delta_B.
  \]
  Together with (\ref{sim2}) this proves (\ref{par01}).
  
For the proof of (\ref{park}) it suffices to show that
$\mu(AB)\ge\mu(A)\mu(B)$ since $\|AB\|_1\le\|A\|_1\|B\|_1$.
To this end, fix $1\le j\le d$, choose $k$ such that $[B]_{k,j}\ge\mu(B)$ and
$m$ such that $[A]_{m,k}\ge\mu(A)$. Then $\max_i[AB]_{i,j}\ge [AB]_{m,j}\ge [A]_{m,k}[B]_{k,j}\ge \mu(A)\mu(B)$. Taking the minimum over $j$ yields (\ref{park}).

To prove (\ref{par4}) let $k$ be such that $\max_i[A]_{i,k}=\mu(A)$. Then
\[
 \delta_A=\frac{\max_j\sum_i[A]_{i,j}}{\mu(A)}\le \frac{\Delta_A\sum_i[A]_{i,k}}{\max_i[A]_{i,k}}\le d\Delta_A.
\]
This concludes to proof of (\ref{par0})--(\ref{par4}).
Next we show that
 \begin{eqnarray}
  \sup\{\Delta_A: A\in \mathcal G_n, n\ge K\}&<&\infty\quad\text{and}\label{ger}\\
  \label{pln}
  \sup\{\delta_A: A\in\mathcal G\}&<&\infty.
 \end{eqnarray}
First note that 
 $\con{bri}:=\sup\{\delta_A: A\in\mathcal A\}<\infty.$
Indeed, let $A\in\mathcal A$ and $B\in\mathcal G_{K-1}$. Choose $j$ such that $\max_i[A]_{i,j}=\mu(A)$. Since $BA\in\mathcal G_K$ we have
$\kappa\le[BA]_{1,j}=\sum_i[B]_{1,i}[A]_{i,j}\le \|B\|\mu(A)$
and consequently,
$\delta_A\le \|A\|_1\|B\|/\kappa\le d\ga^{K}/\kappa$.

Second, due to $\mathcal G_K\subseteq [\kappa,\infty)^{d\times d}$, no element of $\mathcal A$ has a column of zeros. Hence, $\mathcal G_n\subseteq (0,\infty)^{d\times d}$ for all $n\ge K$. Therefore, if we let  $K\le n=mK+r$ with $m\ge 1$ and $0\le r<K$ then for all 
$A_1,\ldots,A_n\in\mathcal A$,
\begin{eqnarray}\nonumber
 \Delta_{A_1\ldots A_n}&\overset{(\ref{par01})}\le&\Delta_{A_1\ldots A_{mK}}\delta_{A_{mK+1}\ldots A_n}\\  
 \label{zh}
 &\overset{(\ref{par0}),(\ref{park})}\le& \max_{i=0}^{m-1}\Delta_{A_{iK+1}\ldots A_{(i+1)K}}\delta_{A_{mK+1}}\ldots \delta_{A_n}\ \le\ \ga^{K}\kappa^{-1}c_{\ref{bri}}^K=:\con{sz},
\end{eqnarray}
where we used in the last step that $\Delta_{B}\le \|B\|/\kappa\le \ga^{K}/\kappa$  for any $B\in\mathcal G_K$.
This implies (\ref{ger}). 
Moreover,  (\ref{park}) implies
$\delta_A\le  c_{\ref{bri}}^{K}$ for all $A\in\mathcal G_n, n\le K,$ and (\ref{par4}) and (\ref{zh}) imply $\delta_A\le  dc_{\ref{sz}}$ for all $A\in\mathcal G_n, n\ge K$. Together this yields (\ref{pln}).

For the proof of  the first claim of the Lemma, (\ref{wiede}), let $k$ be such that $\|x\|=x_k$. Then for all $A\in\mathcal G$,
\begin{eqnarray*}
\|Ax\|&=&\max_i\sum_j[A]_{i,j}x_j \ge \max_i[A]_{i,k}x_k = \|x\|\max_i[A]_{i,k}\\
&\ge& \|x\|\min_j\max_i[A]_{i,j}=\frac{\|A\|_1\|x\|}{\delta_A}\ge \frac{\|A\|\|x\|}{d\delta_A}.
\end{eqnarray*}
Along with (\ref{pln}) this implies (\ref{wiede}). The second claim, (\ref{mann}), follows from (\ref{wiede}) and the definition of the matrix norm $\|\cdot\|$.
For the proof of (\ref{hopf}) let $n\ge K$ and $ A\in\mathcal G_n$. Then
\[
\|A\| =\max_k\sum_{\ell}[A]_{k,\ell}\le \Delta_A\sum_{\ell} [A]_{1,\ell}\le\Delta_A^2\sum_{\ell} [A]_{1,1}= d\Delta_A^2[A]_{1,1}.
\]
This along with (\ref{ger}) implies (\ref{hopf}). The last claim, (\ref{low}), follows from $\kappa\le\|A^K\|\le\|A\|^K$.
\end{proof}
The following result provides bounds on the extinction time of multitype branching process in varying environment. The easy bound is standard and based on a first moment method, i.e.\ Markov's inequality. To the best of our knowledge the opposite bound appeared first in a similar form in  \cite[Theorem 1]{Agr75}. We prove it by the second moment method. For precise asymptotics under different assumptions see e.g.\ \cite{JS67}, \cite{Dya08}.
\begin{prop}\label{ex}{\bf (Bounds on extinction time of multitype branching process in varying environment)}
Fix $\psi=(\psi_n)_{n\ge 1}=((\psi_n^j)_{j=1,\ldots,d})_{n\ge 1}\in (\Phi^d)^{\N}$. Let $(U_{n,k}^j)_{n,k\ge 1; j\in\{1,\ldots,d\}}$ be an i.i.d.\ family of random variables which are distributed uniformly  on $[0,1]$.
Let $\xi_{n,k}^{i,j}:=[\psi_n^j(U_{n,k}^j)]_i$. Fix $s\in\{1,\ldots,d\}$  and define the branching process $(B_n^{})_{n\ge 0}$ in the environment $\psi$ starting at time 0 with one individual of type $s$ as follows. Set $B^{}_0:=e_s$ and define recursively for all $n\ge 1$,
\begin{equation}\label{fol}
 B^{}_{n}:=\left(\sum_{j=1}^d\sum_{k=1}^{[B^{}_{n-1}]_j}\xi_{n,k}^{i,j}\right)_{i=1,\ldots,d}.
\end{equation}
Define the matrices $A_n:=
\left(E\left[\xi_{n,1}^{i,j}\right]\right)_{i,j=1,\ldots,d}$, $n\ge 1,$ and suppose that $\ga,K\in\N,\kappa\in(0,1]$ and $\mathcal A:=\{A_n\mid n\ge 1\}$ satisfy the assumptions of Lemma \ref{dd}.  
Denote for  $n\ge 1$ and $j=1,\ldots,d$ by $\mathcal V_n^j$ the covariance matrix of the vector $(\xi_{n,1}^{i,j})_{i=1,\ldots,d}$ and suppose that
$\con{rip}:= \sup_{n\ge 1, j=1,\ldots,d}\|\mathcal V_n^j\|<\infty.$
Then there is a constant $c_{\ref{dee}}=c_{\ref{dee}}(\ga,K,\kappa,d,c_{\ref{rip}})$ such that for all  $n\ge 1$,
  \[
 c_{\ref{dee}}\frac{\|A_{n}\ldots A_1\|}{\sum_{k=1}^{n}\|A_{n}\ldots A_{k}\|}\le P[B^{}_n\ne 0]\le
 d\|A_{n}\ldots A_1\|.
  \]
\end{prop}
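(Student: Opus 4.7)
The upper bound is immediate from the first-moment identity $E[B_n]=M_n e_s$, where $M_n := A_n\cdots A_1$: since $B_n\in\N_0^d$, Markov's inequality gives
\[
P[B_n\ne 0]\le E[\|B_n\|_1]=\|M_n e_s\|_1\le d\|M_n\|,
\]
the last step because every entry of $M_n$ is bounded by $\|M_n\|$.

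For the lower bound I would apply the Paley--Zygmund (second-moment) inequality to the scalar $Z_n:=\|B_n\|_1$. The mean is $EZ_n=\|M_n e_s\|_1$, and for $n\ge K$ the primitivity estimates in Lemma \ref{dd}---specifically the uniform boundedness of $\delta_{M_n}$ shown in (\ref{ger}) combined with (\ref{par4})---force every entry of $M_n$ to be comparable to $\|M_n\|$, so $EZ_n\ge c_1\|M_n\|$ for $n\ge K$.

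The variance $\operatorname{Var}(Z_n)=\mathbf{1}^T\operatorname{Cov}(B_n)\mathbf{1}$, with $\mathbf{1}=(1,\dots,1)^T$, is the real work. Iterating the standard multitype covariance recursion
\[
\operatorname{Cov}(B_n)=E\!\Bigl[\sum_{j=1}^d [B_{n-1}]_j\,\mathcal V_n^j\Bigr]+A_n\operatorname{Cov}(B_{n-1})A_n^T
\]
yields
\[
\operatorname{Cov}(B_n)=\sum_{k=1}^n P_k\,E_k\,P_k^T,\qquad P_k:=A_n\cdots A_{k+1},\quad E_k:=\sum_{j=1}^d [M_{k-1}e_s]_j\,\mathcal V_k^j,
\]
with $M_0:=I$ and the convention $P_n=I$. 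Contracting with $\mathbf{1}$ and using (BD2) (giving $\mathbf{1}^T\mathcal V_k^j\mathbf{1}\le d c_{\ref{rip}}$) together with crude estimates $\|P_k^T\mathbf{1}\|_\infty\le d\|P_k\|$ and $\|M_{k-1}e_s\|_1\le d\|M_{k-1}\|$ bounds the $k$-th summand by $C\|P_k\|^2\|M_{k-1}\|$. To fold this back into a quantity controlled by $\|M_n\|$, I would apply (\ref{mann}) twice: to $(A_n\cdots A_k)\cdot M_{k-1}=M_n$, giving $\|A_n\cdots A_k\|\,\|M_{k-1}\|\le c\|M_n\|$; and to $P_k\cdot A_k=A_n\cdots A_k$, which combined with the uniform lower bound $\|A_k\|\ge\kappa^{1/K}$ from (\ref{low}) gives $\|P_k\|\le c'\|A_n\cdots A_k\|$. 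Multiplying produces $\|P_k\|^2\|M_{k-1}\|\le c''\,\|M_n\|\,\|A_n\cdots A_k\|$, and therefore
\[
\operatorname{Var}(Z_n)\le C'\,\|M_n\|\sum_{k=1}^n\|A_n\cdots A_k\|.
\]
Plugging the mean and variance bounds into Paley--Zygmund, and observing that the $k=1$ term of the sum is $\|M_n\|$ itself (so that the denominator is controlled by a constant times $\sum_{k=1}^n\|A_n\cdots A_k\|$), delivers the desired lower bound for $n\ge K$.

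The residual range $1\le n<K$ is dispatched via the monotonicity $\{B_K\ne 0\}\subseteq\{B_n\ne 0\}$ (nonextinction at time $K$ entails nonextinction earlier) together with the trivial bound $\|M_n\|/\sum_{k=1}^n\|A_n\cdots A_k\|\le 1$ and the positive lower bound on $P[B_K\ne 0]$ already established in the previous paragraph; this at worst shrinks the constant $c_{\ref{dee}}$. The main obstacle throughout is the variance collapse: because the product $P_k\cdot M_{k-1}$ skips the matrix $A_k$, (\ref{mann}) does not apply to $P_k$ and $M_{k-1}$ directly and one must route through $A_n\cdots A_k$, which is precisely where the uniform lower bound (\ref{low}) becomes essential---so every part of Lemma \ref{dd} genuinely contributes.
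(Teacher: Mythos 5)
Your proposal is correct and follows essentially the same route as the paper: Markov's inequality for the upper bound, the second-moment (Paley--Zygmund) method for the lower bound, with the second-moment/covariance recursion iterated and the products controlled via (\ref{wiede}), (\ref{mann}), and (\ref{low}) from Lemma \ref{dd}. The only cosmetic difference is that the paper invokes (\ref{wiede}) directly for the mean lower bound, which already holds for all of $\mathcal G$ and so makes the separate treatment of $1\le n<K$ unnecessary, whereas your detour through (\ref{ger}) forces (and you correctly supply) that extra step.
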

\begin{proof} 
It follows from (\ref{fol}) that $E[B^{}_n]=A_nE[B^{}_{n-1}]$ for all $n\ge 1$, see e.g.\ \cite[Chapter II, (4.1)]{Har63}. Therefore,
$E[B_n]=A_n\ldots A_1e_s$ for all $n\ge 0$. Thus
\[P[B_n\ne 0]=P[\|B_n\|_1\ge 1]\le E[\|B_n\|_1]=\|A_n\ldots A_1e_s\|_1\le d\|A_n\ldots A_1\|.
\]
For the lower bound set $\mathbf C_n:=\left(E[[B^{}_n]_i[B^{}_n]_j]\right)_{i,j=1,\ldots,d}$.
By the second moment method
\begin{eqnarray}\nonumber
 P[B^{}_n\ne 0] &=& P\left[\|B^{}_n\|>0\right]\ge\frac{\left(E[\|B^{}_n\|]\right)^2}{E\left[\|B^{}_n\|^2\right]}\ge
 \frac{\|E[B^{}_n]\|^2}{E\left[\max_i [B^{}_n]_i^2\right]}\\
 &\ge&\frac{\|A_{n}\ldots A_1e_s\|^2}{\sum_{i=1}^dE\left[[B^{}_n]_i^2\right]} 
\overset{(\ref{wiede})}\ge 
\con{bl}\frac{(\|A_{n}\ldots A_1\|\|e_s\|)^2}{\max_{i=1}^dE\left[[B^{}_n]_i^2\right]}
 \ge  c_{\ref{bl}}\frac{\|A_{n}\ldots A_1\|^2}{\|\mathbf C_n\|}.\label{cga}
\end{eqnarray}
By \cite[Chapter II, (4.2)]{Har63} for all $n\ge 1$,
\begin{eqnarray*}
  \mathbf C_{n}&=&A_{n}\mathbf C_{n-1}A_{n}^T+\sum_{j=1}^dE\left[[B^{}_{n-1}]_j\right]\mathcal V_{n}^j\\
  &=&A_{n}\ldots A_1\mathbf C_0A_1^T\ldots A_{n}^T+\sum_{k=1}^{n}A_{n}\ldots A_{k+1}\left(\sum_{j=1}^dE[[B^{}_{k-1}]_j]\mathcal V_k^j\right)A_{k+1}^T\ldots A_{n}^T
\end{eqnarray*}
by induction.
Consequently, 
  \begin{eqnarray*}
\|\mathbf C_{n}\|&\le&\con{gst}\|A_{n}\ldots A_1\|^2
+\ \sum_{k=1}^{n}\|A_{n}\ldots A_{k+1}\|\left(\sum_{j=1}^dE[[B^{}_{k-1}]_j]\|\mathcal V_k^j\|\right)\|(A_{n}\ldots A_{k+1})^T\|\nonumber\\
&\overset{(\ref{low})}\le&c_{\ref{gst}}\|A_{n}\ldots A_1\|^2+\con{wo}\sum_{k=1}^{n}\|A_{n}\ldots A_{k+1}\|^2\|A_k\|\|E[B^{}_{k-1}]\|_1\\
&\overset{(\ref{wiede})}\le&c_{\ref{gst}}\|A_{n}\ldots A_1\|^2+\con{wer}\sum_{k=1}^{n}\|A_{n}\ldots A_{k+1}\|\|A_{n}\ldots A_{k}E[B^{}_{k-1}]\|\\
&\le&c_{\ref{gst}}\|A_{n}\ldots A_1\|^2+
   c_{\ref{wer}}\|A_{n}\ldots A_{1}\|\|E[B^{}_0]\|\sum_{k=1}^{n}\|A_{n}\ldots A_{k+1}\|\\
   &\le& \con{waru}\|A_{n}\ldots A_1\| \sum_{k=1}^{n}\|A_{n}\ldots A_{k}\|.
 \end{eqnarray*}
  Substituting this into (\ref{cga}) yields the claim.
 \end{proof}
\begin{lemma}{\bf (Concentration inequality)} \label{mcd}
Assume {\em (BD1)} and {\em (PR)}.
Then there are constants $c_{\ref{coo}}$ and $c_{\ref{co}}$ depending on $(d,\vk,\ga_1,K)$ such that for all $n\ge 0$ and  $t\in(0,\infty)$,
\begin{equation}P\left[|S_n-\la n|\ge t\right]\le c_{\ref{coo}}\exp\left(-c_{\ref{co}}t^2/n\right).
 \label{cl}
\end{equation}
\end{lemma}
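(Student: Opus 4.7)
The plan is to apply McDiarmid's bounded differences inequality to $S_n$, viewed as a function of the i.i.d.\ matrices $A_1,\ldots,A_n$, and then separately to control $|E[S_n]-\la n|$ so as to replace the mean by $\la n$ in the resulting tail bound.

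\emph{Bounded differences.} First I would show that changing one coordinate $A_i$ to any $A_i'$ in the support of $A_1$ alters $S_n$ by at most a constant $C=C(d,\vk,\ga_1,K)$. Writing $P:=A_1\cdots A_{i-1}$ and $Q:=A_{i+1}\cdots A_n$, which both lie in the semigroup $\mathcal G$ from Lemma~\ref{dd}, two applications of (\ref{mann}) combined with the trivial submultiplicativity of $\|\cdot\|$ give
\[c^{-2}\|P\|\,\|A_i\|\,\|Q\|\;\le\;\|PA_iQ\|\;\le\;\|P\|\,\|A_i\|\,\|Q\|,\]
and analogously with $A_i'$ in place of $A_i$. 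Taking logarithms, the variation is bounded by $2\ln c+\bigl|\ln\|A_i\|-\ln\|A_i'\|\bigr|$. Now (BD1) bounds $\|A_i\|$ from above by $\ga_1$, while (\ref{low}) of Lemma~\ref{dd} bounds it from below by $\vk^{1/K}$, so the remaining term is also bounded by a constant. The McDiarmid / Azuma--Hoeffding inequality then yields
\[P\bigl[|S_n-E[S_n]|\ge s\bigr]\;\le\;2\exp\bigl(-2s^2/(nC^2)\bigr)\qquad\text{for all }s>0.\]

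\emph{Centering.} Next I would show $|E[S_n]-\la n|\le\ln c$. Submultiplicativity of $\|\cdot\|$ gives $S_{n+m}\ge S_n+S'_m$, where $S'_m$ is an independent copy of $S_m$, while (\ref{mann}) applied to $A=A_1\cdots A_n\in\mathcal G$ and $B=A_{n+1}\cdots A_{n+m}$ gives the reverse bound $S_{n+m}\le S_n+S'_m+\ln c$. Taking expectations, $(E[S_n])$ is simultaneously super- and (up to an additive $\ln c$) subadditive; Fekete's lemma applied in both directions, together with the a.s.\ identification of the Fekete limit as $\la$, yields $\la n-\ln c\le E[S_n]\le\la n$.

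\emph{Combining.} For $t\ge 2\ln c$ the triangle inequality gives $P[|S_n-\la n|\ge t]\le P[|S_n-E[S_n]|\ge t/2]\le 2\exp(-t^2/(2nC^2))$; for $t<2\ln c$ the claimed estimate is trivial after enlarging the prefactor $c_{\ref{coo}}$. This produces (\ref{cl}).

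The main obstacle is establishing the uniform bound on the per-coordinate variation: the upper bound $\|A_i\|\le\ga_1$ is immediate from (BD1), but the matching lower bound $\|A_i\|\ge\vk^{1/K}$ from (\ref{low}) is the nontrivial input and depends essentially on the joint primitivity assumption (PR). Without such a two-sided bound on the individual norms, a single $A_i$ could make $\|A_1\cdots A_n\|$ arbitrarily small, a single coordinate could then shift $S_n$ by an unbounded amount, and the bounded-differences route would collapse.
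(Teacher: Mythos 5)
Your argument is correct and follows the paper's proof in overall structure: McDiarmid's bounded differences inequality for $S_n$ viewed as a function of $(A_1,\ldots,A_n)$, plus a separate uniform bound on the centering error $|E[S_n]-\la n|$, combined at the end. The bounded-differences step is essentially the paper's own: you bound the per-coordinate variation by two applications of (\ref{mann}) together with $\vk^{1/K}\le\|A_i\|\le\ga_1$ from (\ref{low}) and (BD1), exactly as the paper does. The centering step is where you genuinely diverge, and your route is simpler. The paper detours through the $(1,1)$-entry of $A_1\cdots A_{nK}$: it observes that $-\ln[A_1\cdots A_{nK}]_{1,1}$ is subadditive, applies Fekete along multiples of $K$, transfers the bound back to $E[S_{nK}]$ via (\ref{hopf}), and then handles the remainder $r<K$ by submultiplicativity and (BD1). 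You instead note that (\ref{mann}), applied with $A=A_1\cdots A_n\in\mathcal G$ and $B=A_{n+1}\cdots A_{n+m}$, already yields the two-sided approximate additivity $S_n+S_m'\le S_{n+m}\le S_n+S_m'+\ln c$ at the level of $\|\cdot\|$ itself; Fekete's lemma in each direction, with the common limit identified as $\la$ (using $E[S_n]/n\to\la$, which follows from a.s.\ convergence plus the uniform boundedness of $S_n/n$ that (BD1), (\ref{mann}), (\ref{low}) give), pins $E[S_n]$ into $[\la n-\ln c,\,\la n]$ in one stroke, with no $K$-block bookkeeping and no appeal to (\ref{hopf}). Both methods yield the same $O(1)$ centering error from the same hypotheses, and the final combination step is identical.
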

\begin{proof}
Denote by $\mathcal A\subseteq [0,\infty)^{d\times d}$ the support of $A_1$.
Due to (BD1) and (PR) the assumptions of Lemma \ref{dd} are satisfied.
First we  show the existence of  $\con{coa}>0$ such that for all $n\ge 0$ and  $t>0$,
\begin{equation}P\left[|S_n-E[S_n]|\ge t\right]\le 2\exp\left(-c_{\ref{coa}}t^2/n\right).\label{mc1}
\end{equation}
 Let $f(B):=-\ln\|B_1\ldots B_n\|$ for any $n\ge 1$ and  $B=(B_1,\ldots,B_n)\in \mathcal A^n$.
  Suppose  $B,B'\in \mathcal A^n$ differ only in a single coordinate, say the $k$-th one. Then 
\begin{eqnarray*}
f(B)-f(B')&\le&\ln\left(\|B_1\ldots B_{k-1}\| \|B'_k\| \|B_{k+1}\ldots B_{n}\|\right)\\
&&-\ln\left(c^2\|B_1\ldots B_{k-1}\| \|B_k\| \|B_{k+1}\ldots B_{n}\|\right)
\le  \con{con}
\end{eqnarray*}
due to submultiplicativity, $\|B'_k\|\le \ga_1$, (\ref{mann}), and (\ref{low}).
By symmetry, $|f(B)-f(B')|\le c_{\ref{con}}$. Now (\ref{mc1}) follows from McDiarmid's inequality \cite[Lemma (1.2)]{McD89}.
By the subadditive ergodic theorem,
\begin{equation}\label{elle}
\sup_{n\ge 1}\frac{E\left[S_n\right]}n\ =\ \la\ =\ \lim_{n\to\infty}\frac{E[S_{nK}]}{nK}\ \le\ \liminf_{n\to\infty}\frac{E[-\ln [A_1\ldots A_{nK}]_{1,1}]}{nK}.
\end{equation}
Since $[AB]_{1,1}\le [A]_{1,1}[B]_{1,1}$ for any $A,B\in[0,\infty)^{d\times d}$, another application of the subadditive ergodic theorem yields that the right most side of (\ref{elle}) is equal to
\begin{equation}\label{war}
\inf_{n\ge 1}\frac{E[-\ln [A_1\ldots A_{nK}]_{1,1}]}{nK}\overset{(\ref{hopf})}\le \inf_{n\ge 1}\frac{\con{bat}+E[S_{nK}]}{nK}.
\end{equation}
By submultiplicativity, for all $0\le r<K$ and $n\ge 1,$
$E[S_{nK+r}]\ge E[S_{nK}]+rE[S_1]\ge E[S_{nK}]-K\ln \ga_1$ due to (BD1). Consequently, the right hand side of (\ref{war}) is at most 
\[\inf_{0\le r<K}\inf_{n\ge 1}\frac{\con{bath}+E[S_{nK+r}]}{nK}\le \inf_{n>K}\frac{c_{\ref{bath}}+E[S_n]}{n-K}.
\]
Together with (\ref{elle}) this implies that $|\la n-E[S_n]|\le \la K+c_{\ref{bath}}$ for all $n>K$. 
The claim now follows from (\ref{mc1}).
\end{proof}
\begin{lemma}\label{W}
 Under the assumptions of Lemma \ref{mcd} suppose that $\la>0$ such that $\si:=\sum_{i\ge 1}\|A_1\ldots A_i\|<\infty$ a.s.. Let $\zeta>0$ and $0<\eta\le 1$  and set
 \[
  T:=\inf\left\{n\ge 0\mid\forall i>n:\ S_i\le \la i+\zeta i^\eta-\ln\si\right\}.
 \]
Then there are constants $c_{\ref{et}}, c_{\ref{et2}}$ depending on $(d,\ga_1,K,\vk,\la,\zeta,\eta)$ such that 
\[P[T=n]\le c_{\ref{et}}\exp\left(-c_{\ref{et2}}n^{2\eta-1}\right)\quad\text{for all $n\ge 0$.}\]
\end{lemma}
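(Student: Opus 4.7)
My plan is to reduce the problem to Lemma \ref{mcd} together with a separate tail bound for $\ln\sigma$, which is the random quantity shifting the drift barrier in the definition of $T$.

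First, I would localize the ``failure'' to time $n$. For $n\ge 1$, the minimality in the definition of $T$ means that $m=n-1$ violates the condition while $m=n$ satisfies it; thus there exists $i>n-1$ with $S_i>\la i+\zeta i^\eta-\ln\si$, but no such $i>n$. Hence the only offending index is $i=n$, giving
\[
P[T=n]\le P\left[S_n-\la n>\zeta n^\eta-\ln\si\right]\le P\left[S_n-\la n>\tfrac{\zeta}{2}n^\eta\right]+P\left[\ln\si>\tfrac{\zeta}{2}n^\eta\right].
\]
Lemma \ref{mcd} bounds the first summand by $c_{\ref{coo}}\exp\!\bigl(-c_{\ref{co}}\zeta^2 n^{2\eta-1}/4\bigr)$, which already has the required form.

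For the second summand I would prove that $\ln\si$ has exponential tails. Define
\[
\tau:=\inf\{m\ge 0:S_i\ge \la i/2 \text{ for all } i\ge m\}.
\]
By (BD1), $\|A_1\ldots A_i\|\le\ga_1^i$, whereas for $i>\tau$ we have $\|A_1\ldots A_i\|=e^{-S_i}\le e^{-\la i/2}$. Splitting the sum at $\tau$ therefore yields a bound of the shape $\si\le \con{qu}\ga_1^\tau+\con{qu2}$, so $\ln\si\le \con{ls}\tau+\con{ls2}$. Lemma \ref{mcd} gives
\[
P[\tau>m]\le \sum_{i>m}P[S_i-\la i\le-\la i/2]\le \sum_{i>m}c_{\ref{coo}}\exp(-c_{\ref{co}}\la^2 i/4),
\]
which decays exponentially in $m$. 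Combining, $P[\ln\si>t]\le \con{ex}\exp(-\con{ex2}t)$ for all $t\ge 0$ (adjusting constants to absorb small $t$). Plugging in $t=\zeta n^\eta/2$ and using $\eta\le 1$ (so $n^\eta\ge n^{2\eta-1}$) yields the second summand bounded by $c\exp(-c'n^{2\eta-1})$, completing the proof.

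The main obstacle is precisely the tail of $\si$: without it we could not replace the random drift barrier $\la i+\zeta i^\eta-\ln\si$ by the deterministic one $\la i+\tfrac{\zeta}{2}i^\eta$. The key point enabling this is the deterministic upper bound $\gamma_1^i$ from (BD1), which converts the one-sided concentration of $S_i$ around $\la i$ into an exponential tail for $\ln\si$. Everything else is a routine application of Lemma \ref{mcd}.
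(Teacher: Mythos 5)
Your proposal is correct and follows essentially the same route as the paper: localize the failure at index $i=n$, split the event $\{S_n-\la n>\zeta n^\eta-\ln\si\}$ on whether $\ln\si$ exceeds $(\zeta/2)n^\eta$, then apply Lemma~\ref{mcd} to the first piece and an exponential tail bound on $\ln\si$ to the second, finishing with $n^\eta\ge n^{2\eta-1}$. The only cosmetic difference is in how the $\si$-tail is obtained: you introduce a stopping time $\tau$ after which $S_i\ge \la i/2$ and bound $\ln\si$ linearly in $\tau$, whereas the paper union-bounds $\{\si\ge t\}$ directly over the summands $\|A_1\cdots A_i\|$ (if every term is below $c\,e^{-\la i/2}t$ then $\si<t$) and applies the concentration inequality termwise; both reduce to Lemma~\ref{mcd} and yield an exponential tail for $\ln\si$.
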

\begin{proof}
 Let $\con{abc}:=\left(\sum_{i\ge 1}e^{-\la i/2}\right)^{-1}$. Then for all $t>c_{\ref{abc}}^{-1}$,
 \begin{eqnarray}\nonumber
  P[\si\ge t]&\le&\sum_{i\ge 1}P\left[\|A_1\ldots  A_i\|\ge c_{\ref{abc}}e^{-\la i/2}t\right]\ \le\ \sum_{i\ge 1}P[|S_i-\la i|\ge \la i/2+\ln (c_{\ref{abc}}t)]\\ 
  &\overset{(\ref{cl})}\le&\sum_{i\ge 1}c_{\ref{coo}}e^{-c_{\ref{co}}\left(\la i/2+\ln (c_{\ref{abc}}t)\right)^2/i} \label{cdf}
  \ \le\  c_{\ref{coo}}\sum_{i\ge 1}e^{-c_{\ref{co}}\left(\la^2 i/4+\la \ln (c_{\ref{abc}}t)\right)}\ =\ \con{art}t^{-\con{vb}}.
 \end{eqnarray}
 Therefore, for all $n\ge 1$,
 \begin{eqnarray*}
  P[T=n]&\le& P\left[\la n+\zeta n^\eta-\ln \si\le S_n\right]\\
  &\le& P\left[\la n+\zeta n^\eta-\ln \si\le S_n\le \la n+(\zeta/2) n^\eta\right]  + 
  P\left[S_n\ge \la n+(\zeta/2) n^\eta\right]\\
  &\overset{(\ref{cl})}\le&P[\si\ge e^{(\zeta/2) n^\eta}]+c_{\ref{coo}}e^{-c_{\ref{et2}}n^{2\eta-1}}
  \overset{(\ref{cdf})}\le c_{\ref{art}}e^{-\con{qua}n^\eta}+c_{\ref{coo}}e^{-c_{\ref{et2}}n^{2\eta-1}}.
 \end{eqnarray*}
  Since $\eta\le 1$ this yields the claim.
\end{proof}

{\bf Acknowledgment:} The author is grateful to Martin M\"ohle for pointing out \cite{Lam70}
and thanks him and  Elena Kosygina for helpful discussions.

\end{document}